\newcommand{\NN}{\mathbb{N}}
\newcommand{\N}{\mathbb{N}}
\newcommand{\RR}{\mathbb{R}}
\newcommand{\R}{\mathbb{R}}
\newcommand{\C}{\mathbb{C}}
\newcommand{\Z}{\mathbb{Z}}
\newcommand{\norm}[1]{\lVert#1\rVert}
\renewcommand{\sl}{%
\operatorname{\mathfrak s\mathfrak l}
}
\newcommand \re {{%
\operatorname{Re}
}}
\newcommand{%
\vol} {{%
\operatorname{vol}
}}
 \DeclareMathOperator{\SL}{SL}
\newtheorem{theorem}{Theorem}[section]
\newtheorem {lemma} [theorem]{Lemma}
\newtheorem {proposition}[theorem]{Proposition}
\newtheorem{remark}[theorem]{Remark}
\title[Cohomological equation and cocycle rigidity ]
{Cohomological equation and cocycle rigidity of discrete parabolic actions}
\author{James Tanis}
\thanks{}   
\address[James Tanis]{The MITRE Corporation \\ McLean, VA 22102, USA \footnote{\textbf{Approved for Public Release; Distribution Unlimited. Case Number 18-1082.}  The first author's affiliation with The MITRE Corporation is provided for identification purposes only, and is not intended to convey or imply MITRE's concurrence with, or support for, the positions, opinions or viewpoints expressed by the authors. \textcopyright 2018 The MITRE Corporation.
All rights reserved}}
\email{jhtanis@mitre.org}
\author{ Zhenqi Jenny Wang$^1$ }
\thanks{ $^1$ Based on research supported by NSF grant   DMS-1700837}
\address[Zhenqi Jenny Wang]{Department of Mathematics\\
        Michigan State University\\
        East Lansing, MI 48824,   USA}
\email{wangzq@math.msu.edu}
\begin{document}
\begin{abstract}
We study the cohomological equation for discrete horocycle maps on $\SL(2,\RR)$ and $\SL(2,\RR)\times \SL(2,\RR)$ via representation theory.
Specifically, we prove Hilbert Sobolev non-tame estimates for solutions of the cohomological equation of horocycle maps in representations of $\SL(2,\RR)$.  Our estimates improve on previous results and are sharp up to a fixed, finite loss of regularity.  
Moreover, they are tame on a co-dimension one subspace of $\sl(2, \R)$, and we prove tame cocycle rigidity for some two-parameter discrete actions, improving on a previous result.  
Our estimates on the cohomological equation of horocycle maps overcome difficulties in previous papers by working in a more suitable model for $\SL(2, \R)$ in which all cases of irreducible, unitary representations of $\SL(2, \R)$ can be studied simultaneously.

Finally, our results combine with those of a very recent paper by the authors to give cohomology results for discrete parabolic actions in regular representations of some general classes of simple Lie groups, providing a fundamental step toward proving differential local rigidity of parabolic actions in this general setting.
\end{abstract}
\maketitle
\tableofcontents

\section{Introduction}

Cohomology arises in various problems in dynamical systems, such as those concerning the existence of invariant measures and mixing of suspension flows, and it is of central importance to rigidity and stability questions, see \cite{damjanovic2011local}, \cite{damianovic2005periodic}, \cite{damjanovic2010local} and \cite{katok1996cocycles}.

 There is an abundance of rigidity results for higher-rank (partially) hyperbolic algebraic systems. Local rigidity and cocycle rigidity for higher-rank standard hyperbolic algebraic actions were proved in the 1990's by Katok and Spatzier \cite{katok1994first}, \cite{katok1997differential}.  Then the results were extended to a large class of higher-rank partially hyperbolic actions, see for example \cite{damianovic2005periodic}, \cite{katok1996cocycles}, \cite{katok1994first}, \cite{katok1994subelliptic} and  \cite{Wang2017}. These results were proven using geometric arguments that rely on the fast separation of orbits.

Much less in known about parabolic systems.
Parabolic systems are fundamentally different from partially hyperbolic ones in that nearby orbits separate from each other at a much slower rate, namely, at so-called polynomial rather than exponential speed.  For this reason, the methods used to prove local or cocycle rigidity results for higher-rank partially hyperbolic systems do not work in the parabolic setting.  All known examples of parabolic systems are homogeneous, such as horocycle flows and Heisenberg nilflows.  So instead, a successful approach has been to make use of the underlying algebraic nature of the system and proceed via the theory of unitary representations.

Tame estimates for solutions to the cohomological equation of horocycle flows were obtained by Flaminio and Forni in \cite{flaminio2003invariant}.  Subsequently, several authors used these results to study the cohomological equation in some models of rank two continuous, parabolic actions on products of $\SL_2$ with coefficients in $\R$ or $\C$, see for example the works by Mieczkowski, Damjanovic-Katok and Ramirez in \cite{mieczkowski2007first}, \cite{damjanovic2011local} and \cite{ramirez2009cocycles}, respectively.
The second author of this paper studied rank two continuous parabolic actions on some general classes of simple, higher rank Lie groups in \cite{wang2015cohomological}.

There have also been 
cohomology results in the nilmanifold setting, which require a diophantine condition.   Flaminio and Forni proved tame estimates for the solution of the cohomoloigical equation of Heisenberg nilmanifolds in \cite{flaminio2006equidistribution}.  They later obtained non-tame estimates for solutions to the cohomological equation in higher step nilmanifolds in \cite{flaminio2014effective}.  Cosentino and Flaminio extended \cite{flaminio2006equidistribution} in a different direction, by increasing the dimension of the Heisenberg nilmanifold rather than the step.  They proved a tame splitting for the isotropic subgroups of the higher dimensional Heisenberg group.  

Unlike in the partially hyperbolic setting, cohomology results for parabolic maps tend to be significantly harder to obtain than the corresponding results for parabolic flows.  For example, the space of obstructions is more complicated for maps than flows, as it tends to be infinite rather than finite dimensional in irreducible, unitary representations of the group.
Consequently, results for maps are more recent.  The first author proved non-tame estimates for solutions of the cohomological equation of horocycle maps in \cite{tanis2014cohomological}, which was improved to another non-tame estimate by the first author with Flaminio and Forni in \cite{flaminio2016effective}.  In \cite{damjanovic2014cocycle}, Damjanovic and Tanis used such estimates to prove a non-tame splitting for $\Z^2$ actions by horocycle maps on $\SL(2, \R) \times \SL(2, \R) / \Gamma$, where $\Gamma$ is an irreducible, cocompact lattice.
\smallskip

In this paper, we prove non-tame estimates for the solution to the cohomological equation of horocycle maps that are sharp up to a fixed finite loss of regularity, see Theorem~\ref{main_thm1}, thus improving on results in \cite{tanis2014cohomological} and \cite{flaminio2016effective}.
Estimates in previous papers were not sharp because with the unitary models that were used, the discrete series was different and harder than the other cases.
In this paper we were able to obtain sharp estimates by applying a suitable unitary transformation to the Fourier models of $\SL(2, \R)$ used in \cite{flaminio2016effective}, where all cases of irreducible representations (principal series, discrete series, etc. ) could be handled at once and in the same way.
We believe this version of the Fourier model will have other applications as well.  

A principal application for our sharp estimates
is toward the more general problem of obtaining sharp estimates (up to a fixed, finite loss of regularity)  for solutions to the cohomological equation of discrete parabolic maps in regular representations of some classes of possibly higher-rank, simple Lie groups, see Theorem~2.1 of \cite{tanis2017cohomological},
where analogous results were already obtained for parabolic flows in \cite{wang2015cohomological}.
A main reason that cohomological equations are difficult to study in the higher-rank setting is that the representation theory of higher rank simple Lie groups is very complicated.
So the crucial point is to study the problem in various subgroups,
such as $\SL(2, \R)$, where the representation theory is much simpler than that of the entire group.  Hence, our results on horocycle maps are used in \cite{tanis2017cohomological} to prove analogous results for regular representations of (some classes of) higher-rank simple Lie groups.

Moreover, even though the solution is not tame, we prove that estimates are tame in a co-dimension one subspace of the Lie algebra, see Theorem~\ref{eq:tame_X_V}, which is used in \cite{tanis2017cohomological} to deduce tame cocycle rigidity for abelian discrete parabolic actions in the higher-rank setting.  Specifically, tame estimates in a co-dimension one subspace allow us conclude that solutions to the cohomological equation of the discrete parabolic map in regular representations of (some classes of) higher-rank Lie groups are also tame in a co-dimension one subspace of the Lie algebra, and the tame cocycle rigidity result is deduced from this.  An important application of tame cocycle rigidity is to establish local rigidity by the KAM iterative scheme, see \cite{damjanovic2010local}, \cite{damjanovic2011local}.

To complete the picture, in this paper we also prove tame cocycle rigidity for discrete actions by horocycle maps on $\SL(2, \R) \times \SL(2, \R)$, improving on the non-tame cocycle rigidity estimates obtained in \cite{damjanovic2014cocycle}.  This may also be viewed as a motivating example for the analogous higher-rank result appearing in \cite{tanis2017cohomological}.

Finally, to keep the paper self-contained, we give a simpler proof of the non-tame lower bound for solutions to the cohomological equation of horocycle maps than the one appearing in \cite{tanis2017cohomological}, which concerned general root unipotent maps in $\SL(n ,\R)$.
To the best of our knowledge, this result, and the more general version in \cite{tanis2017cohomological}, are the first examples of non-tame solutions to a cohomological equation in the non-commutative, homogeneous setting.  

\subsection{Results}
The Lie algebra of $\SL(2, \R)$ is generated by the vector
fields
 \[
 X= \begin{pmatrix} {1}&0\\0& {-1}
 \end{pmatrix}, \quad U=\begin{pmatrix} 0&1\\0& 0
 \end{pmatrix}, \quad V=\begin{pmatrix} 0&0\\1& 0
 \end{pmatrix}.
 \]
which satisfy the commutation relations
\begin{equation}\label{eq:sl-com}
[U, V] = X\,, \ \ \ \ \  \ \ [X, U] = 2 U\,,\ \ \ \ \ \  \ \ [X, V] = -2V\,.
\end{equation}
Let $(\pi,\mathcal H)$ be a unitary representation of $\SL(2, \R)$ and let
\[
\triangle := -X^2 - 2(U^2 + V^2)
\]
be the Laplacian on $\mathcal{H}$.  The operator $(I + \triangle)$ is essentially self-adjoint on $\mathcal{H}$,
so for any $s \geq 0$, the spectral theorem gives that the
operator $(I + \triangle)^{s/2}$ is defined.
Then let $W^s(\mathcal{H}) \subset \mathcal{H}$ be the maximal domain of $(I + \triangle)^{s/2}$ on $\mathcal{H}$
equipped with the inner product
\[
\langle f, g \rangle_s = \langle (I + \triangle)^s f, g\rangle\,,
\]
where $\langle f, g \rangle$ is the $\mathcal{H}$ inner product.
For any $s \geq 0$, define
\begin{equation}\label{eq:f_norm_abstract}
\Vert f \Vert_s = \langle f, f \rangle^{1/2}_s\,.
\end{equation}
Let $W^\infty(\mathcal{H}) = \bigcap_{s > 0} W^s(\mathcal{H})$ be the space of smooth vectors in $\mathcal{H}$.

The above spaces have corresponding distributional dual spaces.
For any $s > 0$, let $W^{-s}(\mathcal{H})$ be the distributional dual space to $W^s(\mathcal{H})$,
and let $W^{-\infty}(\mathcal{H}) = \bigcup_{s > 0} W^{-s}(\mathcal{H})$ be
the distributional dual space to $W^\infty(\mathcal{H})$.

Next we present our results for regular representations of $\SL(2, \R)$ and $\SL(2, \R) \times \SL(2, \R)$.  We remark that they hold for general unitary representations $(\pi,\mathcal{H})$ of $\SL(2, \R)$ and $\SL(2, \R) \times \SL(2, \R)$ provided the relevant spectral gap property holds.  Namely, the spectral gap property for $\SL(2, \R)$ representations is described in Appendix~\ref{sect:SL2R_reps}, and
when $\mathcal H$ is an $\SL(2, \R) \times \SL(2, \R)$ representation, we require that the restriction of $\pi$ to any $\SL(2,\RR)$ factor has a spectral gap.

\subsubsection{One-parameter discrete actions}

Let $\Gamma \subset \SL(2, \R)$ be a lattice and $M := \SL(2, \R) / \Gamma$.
Let $L^2(M)$ be the complex-valued, square-integrable functions on $M$ with respect to the $\SL(2, \R)$-invariant volume form.
Let $\pi$ be the regular representation of $\SL(2, \R)$ on $\mathcal H = L^2(M)$, and for $s \in \R\cup\{\pm\infty\}$, we use the notation
\[
W^{s}(M) := W^s(L^2(M))\,.
\]
The vector field $X$ generates the geodesic flow, and $U$ and $V$ generate the unstable and stable horocycle flows with respect to the geodesic flow, which all act by left multiplication on $M$.  In this paper, we refer to the stable horocycle flow as the horocycle flow $(h_t)_{t \in \R}$.  So for any $t \in \R$,
\[
h_t = \exp(t V)\,.
\]

For a given $L > 0$ and coboundary $g$, we find Sobolev estimates for the solution $f$
to the equation
\begin{equation}\label{eq:map-cohomological}
f \circ h_L - f:=\pi(h_L)f- f  = g\,.
\end{equation}
Invariant distributions for the horocycle map have already been classified in Theorem~1.1 of \cite{tanis2014cohomological},
and a more precise description of the regularity of these
distributions in irreducible representations of $\SL(2, \R)$
was given in Theorem~2.6 of \cite{flaminio2016effective}.
Denote the spaces of invariant distributions for $h_L$ in $W^{-s}(M)$ and $W^{-\infty}(M)$, respectively, by 
\[
\begin{aligned} 
& \mathcal I_L^s := \left\{ D \in W^{-s}(M) : D(f \circ h_L) = D(f) \text{ for any } f\in W^s(M)\right\}\,, \\ 
& \mathcal I_L := \left\{ D \in W^{-\infty}(M) : D(f \circ h_L) = D(f) \text{ for any } f\in W^\infty(M)\right\}\,.
\end{aligned}
\]
These invariant distributions were first classified in Theorem~1.2 of \cite{tanis2014cohomological}, 
and sharper estimates of their regularity were provided in the following theorem.  
\begin{theorem}[Theorem 2.6 of \cite{flaminio2016effective}]  
  \label{theo:invariant-maps} 
  Let 
  \[
  \mathcal I_{0}(M) := \left\{\mathcal D \in W^{-\infty}(M) :  V
  \mathcal D = \mathcal D\right\}\,,
\]
 and let $ \sigma_{\text{pp}} $ be the spectrum
  of the Laplace-Beltrami operator $ \triangle $ on $L^2(M)$.  Then in
  any Sobolev structure $ W^{-s}(M) $, for $ s > 0 $, there is a
  splitting
  \[
    \mathcal I_{L}(M) = \mathcal I_{0}(M) \oplus \mathcal I_{L,
    \text{twist}}(M)\,,
  \]
  where we have $ \mathcal I_{L, \text{twist}}(M) \subset W^{-(1/2+)}
  (M) $ and for each irreducible, unitary space $ H $, the space $
  \mathcal I_{L, \text{twist}}(\Gamma) \cap H^{-(1/2 +)} $ has infinite,
  countable dimension.

  The space $ \mathcal I_{0}(M) $ is described in Theorem 1.1 of
  \cite{flaminio2003invariant} as follows:  It has infinite, countable dimension.  It is a
  direct sum of the trivial representation $ \mathcal I_{%
  \vol} $ and irreducible, unitary representations $ \mathcal I_{\mu} $
  belonging to the principal series, the complementary series, the
  discrete series and the mock discrete series.

  Specifically,
  \begin{itemize}
    \item
      The space $ \mathcal I_{%
      \vol} $ is spanned by the $ \SL(2, \R) $-invariant volume;
    \item
      For $ 0 < \mu < 1 $, there is a splitting $ \mathcal I_\mu =
      \mathcal I_\mu^+ \oplus \mathcal I_\mu^- $, where $ \mathcal I_\mu^
      {\pm} \subset W^{-s}(M) $ if and only if $ s > \frac{1 \pm \sqrt{1
      - \mu}}{2} $, and each subspace has dimension equal to the
      multiplicity of $ \mu \in \text{spec}(\Box) $;
    \item
      If $ \mu \geq 1 $ and $ H_\mu $ is a principal series
      representation, then $ \mathcal I_\mu \subset W^{-s}(M) $ if and
      only if $ s > 1/2 $, and it has dimension equal to twice the
      multiplicity of $ \mu \in \text{spec}(\Box) $;
    \item
      If $ \mu = -n^2 + 2n $ for $ n \in \Z^+ $, then $ \mathcal I_\mu
      \subset W^{-s}(M) $ if and only if $ s > n/2 $ and it has
      dimension equal to twice the rank of the space of holomorphic
      sections of the $ n_{\text{th}} $ power of the canonical line
      bundle over $ M $.
  \end{itemize}
\end{theorem}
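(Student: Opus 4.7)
The plan is to reduce to the case of a single irreducible unitary representation of $\SL(2,\R)$ via the direct integral decomposition of $L^2(M)$, and then classify invariant distributions in each irreducible component using a uniform ``line model'' in which the stable horocycle acts by translation characters. Concretely, I would use the Fourier model advertised in the introduction of the present paper, where $V$ acts (up to unitary equivalence) as multiplication by $i\xi$ on a weighted $L^2$-space on $\R$, the weight $|\xi|^{-\alpha(\mu)}$ depending on the Casimir parameter $\mu$ of the representation. In this model $\pi(h_L)$ acts as multiplication by $e^{iL\xi}$, and the cohomological equation $\pi(h_L)f - f = g$ reduces to the algebraic equation $(e^{iL\xi} - 1) f(\xi) = g(\xi)$.

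With this in hand, the invariance condition $D \circ (\pi(h_L) - I) = 0$ becomes the distributional identity $(e^{iL\xi} - 1) D = 0$, which forces $\mathrm{supp}(D) \subset \Xi_L := \{2\pi n/L : n \in \Z\}$. Every invariant distribution thus splits canonically into its ``zero-mode'' part supported at $\xi = 0$, which is precisely the flow-invariant contribution $\mathcal{I}_0(M)$, and its part supported on $\Xi_L \setminus \{0\}$, which gives the twisted distributions $\mathcal{I}_{L,\mathrm{twist}}(M)$. The flow-invariant piece is already classified in Theorem~1.1 of \cite{flaminio2003invariant}, and the Sobolev exponents in the four bullet points can be recovered from the behavior at $\xi = 0$ of the Sobolev weight in the line model for each series: the square-root dependence $(1 \pm \sqrt{1 - \mu})/2$ in the complementary series reflects the two real roots of the indicial equation, the uniform $1/2$ in the principal series reflects the imaginary roots (with multiplicity two from the $\pm$ pair), and the $n/2$ in the (mock) discrete series reflects the highest/lowest weight structure of the holomorphic line bundle.

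For the twisted part, each nonzero $\xi_n \in \Xi_L$ contributes a delta-type distribution per irreducible component (two in the principal series, from the two asymptotic directions), and since the Sobolev weights grow at most polynomially at infinity, a direct computation of $\|\delta_{\xi_n}\|_{W^{-s}}$ in the weighted $L^2$ picture shows that each such twist distribution belongs to $W^{-s}$ precisely when $s > 1/2$, uniformly in $n$. Summing over $n \neq 0$ and reassembling the direct integral over irreducibles yields the countably infinite-dimensional $\mathcal{I}_{L,\mathrm{twist}}(M) \subset W^{-(1/2+)}(M)$. The main technical obstacle is handling the discrete and mock-discrete series simultaneously with the principal and complementary series, because in standard models these are realized on holomorphic sections of a line bundle rather than on $L^2(\R)$, forcing a case-by-case analysis in previous work such as \cite{tanis2014cohomological} and \cite{flaminio2003invariant}. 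The uniform Fourier model that the present paper relies on is precisely what allows all four series to be treated by a single argument, and this is the key technical device that makes the sharp regularity statement for $\mathcal{I}_{L,\mathrm{twist}}(M)$ accessible.
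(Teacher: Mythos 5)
The paper does not give a proof of this theorem: it is stated verbatim as ``Theorem~2.6 of \cite{flaminio2016effective}'' and is quoted solely to record the classification of invariant distributions for the map $h_L$, so there is no ``paper's own proof'' to compare your argument against. I will therefore assess your outline on its own terms.

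Your overall strategy---pass to the direct integral over irreducibles, move to a Fourier-type model in which $V$ is multiplication by $-i\xi$ and $\pi(h_L)$ is multiplication by $e^{-iL\xi}$, deduce from $(e^{-iL\xi}-1)D=0$ that $\operatorname{supp}D\subset\{2\pi n/L:n\in\Z\}$, and then split into the $\xi=0$ part (flow-invariant) and the $\xi\neq0$ part (twisted)---is essentially the strategy used in \cite{flaminio2016effective} and \cite{tanis2014cohomological}, so the skeleton is sound. However, several steps in your sketch are asserted rather than established and are precisely the points where the cited paper has to work.

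First, the identification ``zero-mode supported at $\xi=0$ $=\mathcal I_0(M)$'' is not automatic. The space $\mathcal I_0(M)$ is flow-invariant ($V\mathcal D=0$, after correcting the obvious typo $V\mathcal D=\mathcal D$), which in the Fourier picture is $\xi\mathcal D=0$; but distributions on the weighted space $L^2(\R,|\xi|^{-\operatorname{Re}\nu}d\xi)$ that are supported at $\{0\}$ are not just the ordinary delta and its derivatives, because the weight $|\xi|^{-\operatorname{Re}\nu}$ is singular there. One has to check, irreducible by irreducible, that the set of such distributions annihilated by $(e^{-iL\xi}-1)$ coincides with the set annihilated by $\xi$, and this is exactly where the $\mu$-dependent exponents $(1\pm\sqrt{1-\mu})/2$ and $n/2$ come in. Your appeal to ``roots of the indicial equation'' is a heuristic standing in for a computation, not a proof.

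Second, your dimension count for the twisted part in the principal series is slightly off: the factor of two there comes from pairing $\xi_m=2\pi m/L$ with $\xi_{-m}$, not from two distributions attached to a single nonzero frequency; and in the discrete/mock discrete series only $m\in\N^+$ occurs (because $\hat f$ is supported on $\R^+$), which is why the count changes.

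Third, and most importantly, your uniform estimate ``$\|\delta_{\xi_n}\|_{W^{-s}}<\infty$ precisely when $s>1/2$, uniformly in $n$'' is not immediate in the raw weighted model: for the discrete series the exponent $\operatorname{Re}\nu=\nu$ can be arbitrarily large, and as the present paper explicitly notes, the measure $|\xi|^{-\operatorname{Re}\nu}d\xi$ caused exactly this kind of trouble in earlier work. The device that makes your uniform claim true is the unitary rescaling $\mathcal A$ of \eqref{eq:def_A}, which replaces the $\nu$-dependent weight by the uniform measure $|\xi|^{-1}d\xi$; without invoking that (or an equivalent normalization) your claim that the twist distributions sit in $W^{-(1/2+)}$ uniformly in $n$ and in the irreducible does not follow from ``the Sobolev weights grow at most polynomially at infinity.'' So the outline is plausible and follows the lines of the cited literature, but the key regularity claims need to be grounded in the specific unitary normalization that this paper introduces.
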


By Theorem~1.2 of \cite{tanis2014cohomological}, coboundaries for the map $h_L$
are those functions that are annihilated by the above invariant distributions.
So let
\[
\begin{aligned} 
& Ann_L^s := \left\{ f \in W^{s}(M) : D(f) = 0 \text{ for any } D\in \mathcal I_L\right\}\,, \\ 
& Ann_L := \left\{ f \in W^{\infty}(M) : D(f) = 0 \text{ for any } D\in \mathcal I_L\right\}\,.
\end{aligned}
\]

Our main theorem on solutions of the cohomological equation of horocycle maps
is the following improved bound.
Our approach follows \cite{flaminio2016effective} and \cite{tanis2017cohomological}
in that we first prove corresponding results for the twisted equation for horocycle flows,
\[
(V + \sqrt{-1} \lambda) f = g\,, \quad \lambda \in \R^*\,,
\]
and deduce the below result for horocycle maps from that.
As mentioned above, we improved on previous estimates by finding a more suitable version of the Fourier model used in \cite{tanis2017cohomological}, where we obtain the bound for all representation cases (principal, complementary, mock discrete and discrete series) simultaneously.

Let $G^V$ be the operator on the space of coboundaries for the horocycle flow given by
\begin{align}\label{for:2}
 V \ G^V(g) = g\,,
\end{align} 
which is uniquely defined (up to additive constants) on $W^s(M)$ when $s \geq 0$ and was studied in \cite{flaminio2003invariant}. 

\begin{theorem}\label{main_thm1}
For any $s \geq 1$, for any $\epsilon > 0$,
there is a constant $C_{s, \epsilon} > 0$ such that the following holds.
For any $L > 0$ and
for any $g \in Ann_L^{2s + 1 + \epsilon}(M)$, there is a solution
solution $f \in W^s(M)$ to the cohomological equation
\eqref{eq:map-cohomological}
such that
\begin{align}\label{for:3}
  \Vert f \circ h_{-L/2} \Vert_s \leq C_{s, \epsilon} \left(\frac{1 + L^{2s}}{L} \Vert G^{V}(g)\Vert_{s} + L^{\epsilon} \frac{(1 + L^s)}{\epsilon} \Vert g \Vert_{2s + 1 + \epsilon}\right)\,.
\end{align}
\end{theorem}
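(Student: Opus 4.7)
The plan is to reduce representation-theoretically to irreducible unitary representations of $\SL(2,\R)$ via direct integration of $L^2(M)$, then in each such representation adopt the unitary transformation of the Fourier model from \cite{flaminio2016effective} advertised in the introduction: in this uniform model $V$ acts as multiplication by $i\xi$ for a real coordinate $\xi$ (independent of the representation type, whether principal, complementary, mock discrete, or discrete series), so $h_L=\exp(LV)$ acts as multiplication by $e^{iL\xi}$. After the symmetric shift by $h_{-L/2}$, equation \eqref{eq:map-cohomological} becomes the pointwise scalar equation $2i\sin(L\xi/2)\,\widetilde f(\xi) = \hat g(\xi)$. The invariant distributions from Theorem~\ref{theo:invariant-maps} correspond to point evaluations at the zeros $\xi=2\pi k/L$ ($k\in\Z$), and the hypothesis $g\in Ann_L^{2s+1+\epsilon}$ ensures that $\hat g$ vanishes there in a sufficiently strong sense that the formal quotient defines an element of $W^s(M)$.

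Following the strategy of \cite{flaminio2016effective} and \cite{tanis2017cohomological}, I first establish an intermediate estimate for the twisted horocycle flow equation $(V+i\lambda)f=g$ in this Fourier model, where the equation reads as division by $i(\xi+\lambda)$ and the estimate amounts to a careful analysis of the small divisor $\xi+\lambda$ against the Sobolev weights; the point is that the $|\lambda|$-dependence must be sharp in order for the geometric sum over $k\neq 0$ in the map problem to close. Translating $\Vert \cdot\Vert_s$ on $M$ into the Fourier model yields, modulo a finite $s$-independent loss coming from Casimir weights, $\Vert f\circ h_{-L/2}\Vert_s^2$ equivalent to a weighted $L^2$ integral $\int |\widetilde f(\xi)|^2(1+\xi^2)^s\,d\xi$. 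The task is then to bound
\[
\int \frac{|\hat g(\xi)|^2 (1+\xi^2)^s}{4\sin^2(L\xi/2)}\,d\xi,
\]
split over three regions. On $|\xi|\lesssim 1/L$ one has $|2\sin(L\xi/2)|\asymp L|\xi|$, and since $\hat g/(i\xi)$ is the Fourier image of $G^V(g)$ this region contributes the $\tfrac{1+L^{2s}}{L}\Vert G^V(g)\Vert_s$ term. On neighborhoods of the zeros $\xi=2\pi k/L$ with $k\neq 0$, I apply the twisted flow estimate with $\lambda=-2\pi k/L$, use the vanishing of $\hat g$ there, and sum the contributions in $k$; balancing regularity expended against the linear-in-$L$ number of zeros in a unit-size window, with an interpolation exponent $\epsilon$, yields the $L^\epsilon(1+L^s)\epsilon^{-1}\Vert g\Vert_{2s+1+\epsilon}$ term. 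The complement of these neighborhoods is routine because there $|\sin(L\xi/2)|$ is bounded below.

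The main obstacle is the small-divisor analysis near the nonzero roots $\xi=2\pi k/L$: their number in a unit frequency window grows linearly in $L$, and the loss of $2s+1+\epsilon$ derivatives together with the $1/\epsilon$ factor are extracted from a careful interpolation between the vanishing of $\hat g$ at each zero (via the annihilation condition) and its gross Sobolev size on the surrounding interval, uniformly over the neighborhood radius. A secondary obstacle, and the reason for the unified Fourier model, is that these bounds must be proved uniformly across all representation types; the unitary transformation advertised in the introduction is designed precisely so that the model looks the same in principal, complementary, discrete, and mock discrete series, thereby avoiding the separate and weaker treatment of the discrete series that appeared in \cite{tanis2014cohomological} and \cite{flaminio2016effective}.
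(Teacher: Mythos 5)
Your high‑level plan matches the paper's: reduce to irreducibles, use the unitary transformation to a Fourier model in which $\mathcal V$ is multiplication by a scalar of $\xi$, shift by $h_{-L/2}$ so the divisor is $2\sin(L\xi/2)$, handle $|\xi|\leq \pi/L$ via $G^V(g)$ and the nonzero roots $\xi=2\pi k/L$ via the twisted estimate with $\lambda=2\pi k/L$, and sum over $k$ using an $\epsilon$‑trick. That is indeed the structure of the paper's argument.

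However, the pivotal step in your sketch is wrong, and it conceals essentially all of the work. You assert that, modulo an $s$‑independent loss, $\Vert f\circ h_{-L/2}\Vert_s^2$ is equivalent to $\int |\widetilde f(\xi)|^2 (1+\xi^2)^s\,d\xi$ and then reduce to bounding $\int |\hat g(\xi)|^2(1+\xi^2)^s/(4\sin^2(L\xi/2))\,d\xi$. But in the Fourier model the Sobolev norm is $\Vert (I-\mathcal V^2-\mathcal X^2-\mathcal U^2)^{s/2}\mathcal A f\Vert_{\mathbb L_\nu^2}$, and while $\mathcal V=-\sqrt{-1}\,\xi$ is indeed a multiplier, $\mathcal X=-2\xi\,d/d\xi$ and $\mathcal U=\sqrt{-1}\bigl(\frac{\nu^2-1}{4\xi}-\xi\,d^2/d\xi^2\bigr)$ are genuine differential operators in $\xi$. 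The Sobolev norm is therefore not a polynomial weight on $|\widetilde f(\xi)|^2$, and there is no ``$s$‑independent loss'' to absorb: applying $\mathcal U^\beta\mathcal X^\alpha$ to the quotient $\hat g(\xi)/(2\sin(L\xi/2))$ differentiates the divisor up to order $\alpha+2\beta$ and amplifies the singularity, which is precisely why the final estimate loses $2s+1+\epsilon$ derivatives (an $s$‑dependent loss). The paper's proof spends its effort exactly here: a Leibniz‑type lemma (Lemma~\ref{lemm:XU-f:1}) for $\mathcal U^\beta\mathcal X^\alpha$ applied to products, uniform bounds on the derivatives of $\phi(\xi L)=e^{-\sqrt{-1}\xi L/2}\frac{-\sqrt{-1}\xi L}{e^{-\sqrt{-1}\xi L}-1}$ on $I_L$, and for $\xi\notin I_L$ the partial‑fraction expansion $\frac{1}{\sin(\xi L/2)}$ into terms $\frac{\xi}{\xi^2-(2\pi a/L)^2}$, with the factor $\xi^\epsilon$ introduced to make the sum over $a$ converge and produce the $L^\epsilon/\epsilon$ factor. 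If you work only with $(1+\xi^2)^s$ you recover at best the $(I-X^2-V^2)^{s/2}$ part of Theorem~\ref{eq:tame_X_V}, not the full $\Vert\cdot\Vert_s$ norm, and you would have no mechanism producing the $2s+1+\epsilon$ loss. Your sketch needs to be reworked to carry the $\mathcal U$ and $\mathcal X$ derivatives through the division explicitly.
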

Moreover, the estimate is tame on a co-dimension one subspace of $\sl(2, \R)$.
\begin{theorem}\label{eq:tame_X_V}
Using the assumptions in the above theorem, we have
\[
\begin{aligned}
\Vert (I - X^2 - V^2)^{s/2} f \circ h_{-L/2} \Vert & \leq C_{s, \epsilon} \frac{1 + L^{2s}}{L} \Vert (I - X^2 - V^2)^{s/2} G_\mu^{V}(g)\Vert  \\
& + C_{s, \epsilon} L^{\epsilon} \frac{(1 + L^s)}{\epsilon} \Vert (I - X^2 - V^2)^{(s+1+\epsilon)/2} g \Vert\,.
\end{aligned}
\]
\end{theorem}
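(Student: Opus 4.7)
The plan is to parallel the argument used to prove Theorem \ref{main_thm1}, but track carefully in which direction of $\sl(2,\R)$ each loss of regularity originates. The key structural observation is that $X$ and $V$ span a closed subalgebra of $\sl(2,\R)$ (the ``$ax+b$'' algebra), since $[X, V] = -2V$, and that $V$ itself generates the flow whose time-$L$ map is $h_L$. Hence $\Ad(h_L)$ preserves this subalgebra with coefficients depending polynomially on $L$: explicitly, $\Ad(h_L) V = V$ and $\Ad(h_L) X = X + 2LV$. In particular, commuting the sub-Laplacian $(I - X^2 - V^2)^{s/2}$ past $\pi(h_L)$ introduces only polynomial-in-$L$ factors and never any derivatives in the $U$-direction.

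First, I would revisit the Fourier model underlying the proof of Theorem \ref{main_thm1} and verify that the operator $(I - X^2 - V^2)^{s/2}$ is compatible with the representation-theoretic reductions carried out there, so that the analysis can be performed fiberwise in each irreducible component and reassembled into the desired global estimate. Next, I would re-derive the estimates for the twisted equation $(V + \sqrt{-1}\lambda) f = g$ with $(I - X^2 - V^2)^{s/2}$ in place of $(I + \triangle)^{s/2}$, and then transfer the result to the discrete equation \eqref{eq:map-cohomological} by the same mechanism used in Theorem \ref{main_thm1}. The improvement from $2s + 1 + \epsilon$ to $s + 1 + \epsilon$ derivatives on the right-hand side is explained by the following: in the proof of Theorem \ref{main_thm1}, the full Sobolev norm on the left includes $U$-derivatives, and controlling $s$ such derivatives requires a further $s$ derivatives of $g$ to absorb the polynomial-in-$L$ coefficients produced by $\Ad(h_L) U = U + LX - L^2 V$. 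Restricting the norm on the left to the $X, V$ subspace eliminates this extra cost, leaving only the loss of $1 + \epsilon$ coming from the Sobolev embedding step in the underlying flow estimates.

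The main obstacle will be to verify that every step in the proof of Theorem \ref{main_thm1} respects the restriction to the $X, V$ subalgebra. In particular, the invariant distributions classified in Theorem \ref{theo:invariant-maps} and the Green operator $G^V$ must be shown to behave well under the modified Sobolev norm; since these objects are representation-theoretic and the relevant splittings occur across the $X, V$ subalgebra, compatibility should hold, but it must be checked in each class of irreducible representation (principal, complementary, discrete and mock discrete series). A second delicate point is the precise $L$-dependence in the commutator $[(I - X^2 - V^2)^{s/2}, \pi(h_L)]$, which must remain polynomial of the same order as in Theorem \ref{main_thm1} so that the $L$-factors $\frac{1 + L^{2s}}{L}$ and $L^\epsilon \frac{1 + L^s}{\epsilon}$ carry over without degradation.
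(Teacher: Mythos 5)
Your conceptual diagnosis is correct and in line with the paper: the non-tameness in Theorem~\ref{main_thm1} comes entirely from the $U$-direction, and restricting the Sobolev norm to the $X,V$-subalgebra removes it. However, you propose substantially more work than is needed. The paper's proof is a one-liner: the estimate \eqref{eq:final_final_map_est} established in the course of proving Theorem~\ref{main_thm1} is already ``derivative-separating'' --- it bounds $\Vert U^\beta X^\alpha V^\gamma f\circ h_{-L/2}\Vert$ for each triple $(\alpha,\beta,\gamma)$ individually, and a glance at Lemma~\ref{lemm:XU-f:1}, \eqref{eq:flow_terms}, and \eqref{eq:map-twist:5} shows that when $\beta=0$ the constraints $j+l+m\le\beta$ force $j=l=m=0$, so the right-hand side involves only $\mathcal X$ and $\mathcal V$ derivatives of $G^{\mathcal V}_\mu(g)$ and $g$. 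Setting $\beta=0$ and summing over $\alpha+\gamma\le s$ yields exactly the statement of Theorem~\ref{eq:tame_X_V}; there is nothing left to ``re-derive'' or to re-check representation by representation, since the Fourier model \eqref{eq:mathcal_vf} already handles all series uniformly. Your $\Ad(h_L)$ bookkeeping is also unnecessary (the Fourier-side computation already encodes this), and as a small slip, $\Ad(h_L)U = U - LX - L^2V$ rather than $U + LX - L^2V$ with the commutation relations \eqref{eq:sl-com}, though this does not affect the polynomial-in-$L$ conclusion.
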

\begin{remark}
Theorem 1.2 of \cite{flaminio2003invariant} shows that for any $s \geq 1$ and for any $\epsilon > 0$, 
\begin{align*}
 \norm{G^V(g)}_s\leq C_{s,\epsilon} \norm{g}_{s+1+\epsilon}.
\end{align*}
Then the above discussion and estimates \eqref{for:3} implies that
\begin{align}\label{for:6}
  \Vert f \circ h_{-L/2} \Vert_s \leq C_{s, \epsilon} \left(\frac{1 + L^{2s}}{L} \Vert g\Vert_{s+1+\epsilon} + L^{\epsilon} \frac{(1 + L^s)}{\epsilon} \Vert g \Vert_{2s + 1 + \epsilon}\right).
\end{align}
\end{remark}

It has already been shown from the lower bound in Theorem~2.2 of \cite{tanis2017cohomological} that the above non-tame estimate is sharp with respect to a loss of regularity of 1/2 derivatives.
However that proof was written for representations of $\SL(n, \R)$, for any $n \geq 2$,
and is overly complicated for the special case of $\SL(2, \R)$.
Hence, we provide a simpler and more transparent version of that proof in Section~\ref{sect:pf1}, which is specific to $\SL(2, \R)$.

\begin{theorem}\label{main_thm3}[special case of Theorem~2.2 of \cite{tanis2017cohomological}]
For any $s \geq 0$, for any $\sigma \in [0, s+1/2)$ and
for any $L > 0$, the following holds.
For every constant $C > 0$ there is a function
$g \in W^\infty(M)$ with a solution $f \in W^\infty(M)$
to the equation
\begin{equation}\label{eq:co_eqn-map}
f \circ h_L - f = g
\end{equation}
such that
\[
\Vert f \Vert_s > C \Vert g \Vert_{s +\sigma}\,.
\]
\end{theorem}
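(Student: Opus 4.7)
By the direct-integral decomposition of $L^2(M)$ into irreducible unitary representations of $\SL(2,\R)$, it suffices to construct a sequence of bad pairs $(f_n,g_n)$ in a single irreducible component $(\pi_\mu,H_\mu)$, for instance a principal series representation. I shall work in the Fourier model of $H_\mu$ described in the body of this paper, under which $V$ acts as multiplication by $i\xi$ on $L^2(\R)$ and $h_L=\exp(LV)$ acts as multiplication by $e^{iL\xi}$. The non-tame behaviour is then driven by the resonant points $\xi_n:=2\pi n/L$ at which $e^{iL\xi_n}=1$.

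The test functions are taken to be smooth bumps of unit $L^2$-mass centred at $\xi_n$ with width $\delta_n$, say $f_n(\xi):=\phi((\xi-\xi_n)/\delta_n)/\sqrt{\delta_n}$, and the corresponding coboundary is $g_n:=f_n\circ h_L-f_n=(e^{iL(\xi-\xi_n)}-1)f_n$. On the support of $f_n$ one has $|e^{iL(\xi-\xi_n)}-1|\lesssim L\delta_n$, so immediately $\|g_n\|_0\lesssim L\delta_n$ while $\|f_n\|_0=1$. To estimate the higher Sobolev norms, I use the identity $\triangle=-4\Box-2(U-V)^2$, which on $H_\mu$ reduces to $\triangle=-4\mu-2K^2$ where $K:=U-V$ is the generator of the maximal compact subgroup. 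In the Fourier model $K$ is a concrete second-order differential operator in $\xi$, so Sobolev norms of $f_n$ at level $s$ translate into explicit weighted $L^2$-expressions involving $\xi_n$ and $\delta_n$; for $g_n$ the multiplicative factor $e^{iL(\xi-\xi_n)}-1$ produces a uniform gain proportional to $L\delta_n$ that partially offsets the higher-order derivatives.

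The main difficulty is to choose the parameters $(\delta_n,\xi_n)$ so that the full loss of $s+1/2$ derivatives is realised. A naive single-bump estimate bounds the gain of one $(e^{iL\xi}-1)$-factor by $L\delta_n$ in $L^2$, which (after accounting for the weighted derivatives in $\|g_n\|_{s+\sigma}$) only yields blow-up of $\|f_n\|_s/\|g_n\|_{s+\sigma}$ for $\sigma<1/2$, independently of $s$. To reach the stronger range $\sigma<s+1/2$, one must exploit the distinct roles of the multiplication operator $V$ and the compact generator $K$ in the Fourier expression of $\triangle$: by shifting the balance between the $\xi_n$-concentration and the $\delta_n$-width according to $s$, one arranges that $\|f_n\|_s$ inherits a growth like an appropriate power of $\delta_n^{-1}$ driven by $K^sf_n$, while $\|g_n\|_{s+\sigma}$ retains the $L\delta_n$ saving weighted only by the lower combination of powers. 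Sending $n\to\infty$ (equivalently $\delta_n\to 0$) along an $s$-dependent schedule then gives $\|f_n\|_s/\|g_n\|_{s+\sigma}\to\infty$ for every $\sigma\in[0,s+1/2)$ and every fixed $L>0$, which establishes the theorem. Both $f_n$ and $g_n$ lie in $W^\infty(M)$ by construction, and the resulting Sobolev blow-up extends to the global regular representation via the direct-integral embedding of $H_\mu$ into $L^2(M)$.
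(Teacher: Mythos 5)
Your construction has a critical gap that you yourself half-acknowledge. You propose to work in a single fixed irreducible component $H_\mu$ and take a sequence of bump functions $f_n$ of width $\delta_n$ centred at the resonant frequencies $\xi_n = 2\pi n/L$, with $g_n = (e^{iL\xi}-1)f_n$. As you note, the naive estimate yields a gain of one factor $L\delta_n$ that is \emph{uniform in the Sobolev level}, so the ratio $\Vert f_n\Vert_s/\Vert g_n\Vert_{s+\sigma}$ blows up only for $\sigma<1/2$. The sentence ``one must exploit the distinct roles of $V$ and $K$\ldots by shifting the balance between the $\xi_n$-concentration and the $\delta_n$-width according to $s$'' is not a construction: in a fixed $H_\mu$ the generator $\hat U$ (equivalently $K=U-V$) is a genuinely second-order differential operator in $\xi$, so every application contributes the same factor $\sim\xi_n\delta_n^{-2}$ to $\Vert f_n\Vert$ \emph{and} to $\Vert g_n\Vert$. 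The per-derivative cost is identical on $f$ and on $g$, the $L\delta_n$ gain is a fixed multiplicative saving, and no schedule $(\xi_n,\delta_n)$ produces an $s$-dependent discrepancy between the two. Your approach therefore saturates at $\sigma<1/2$ and cannot reach $s+1/2$.

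The paper's proof turns on a fundamentally different mechanism: the divergence comes not from a sequence of resonances in one representation but from letting the representation parameter $|\nu|\to\infty$ through the (unbounded) principal series spectrum of $L^2(M)$, with the coboundary supported on a \emph{fixed} interval near the single resonance $\xi=\lambda$. The test function $\hat g(\xi)=\hat q(\xi)(\xi^{\nu+1}-1)$ is adapted to the representation: by $\hat U\,\xi^{\nu+m}=-\sqrt{-1}\,m(\nu+m)\,\xi^{\nu+m-1}$, the operator $\hat U$ behaves like a \emph{first}-order operator on $\hat g$, so $\Vert g\Vert_s\lesssim|\nu|^s$ (Lemma~\ref{f-norm-est}); whereas the integral representation $\hat f(\xi)=-\frac{\sqrt{-1}}{\lambda}\int_0^1\hat g'(\xi_t)\,dt$ converts $\hat U^\beta$ acting on $\hat f$ into $(\frac{d^2}{d\xi^2})^\beta$ acting on $\hat g'$, which is genuinely second-order, giving $\Vert (I-U^2)^{s/2}f\Vert\gtrsim|\nu|^{2s+1/2}/|\lambda|$ (Proposition~\ref{prop:lower_twist}), the extra $|\nu|^{1/2}$ coming from concentration on $I_\nu=[1,1+1/|\nu|]$. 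The resulting quotient $|\nu|^{2s+1/2}/|\nu|^{s+\sigma}=|\nu|^{s-\sigma+1/2}$ diverges exactly for $\sigma<s+1/2$. This $s$-proportional gap in derivative cost is a representation-theoretic effect that disappears once $\nu$ is fixed, so it is not accessible to any single-representation bump construction; to repair your argument you would have to replace the sequence of resonances $\xi_n$ by a sequence of irreducible components with $|\nu_n|\to\infty$ and use the adapted oscillatory profile $\xi^{\nu_n+1}-1$, which is precisely what the paper does.
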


\subsubsection{Two-parameter discrete actions}\label{sec:2}
Let $\Gamma \subset \SL(2, \R) \times \SL(2, \R)$ be an irreducible lattice, and let $M = \SL(2, \R) \times \SL(2, \R) / \Gamma$.  Also let $L^2(M)$ and $W^s(M)$ be defined analogously as in the above $\SL(2, \R)$ case.
We consider the following unipotent maps acting by left multiplication on $M$:
\begin{equation}\label{eq:cocycle_maps}
h_{L_1}^{(1)}=\begin{pmatrix} {1}& L_1\\0& {1}
 \end{pmatrix}\times \begin{pmatrix} {1}&0\\0& {1}
 \end{pmatrix}\,, \quad h_{L_2}^{(2)}=\begin{pmatrix} {1}& 0\\0& {1}
 \end{pmatrix}\times \begin{pmatrix} {1}& L_2\\0& {1}
 \end{pmatrix}\,.
 \end{equation}
The next theorem shows that in contrast to the one-parameter setting, tame estimates hold for solutions to the cohomological equation of two-parameter actions. 
\begin{theorem}\label{th:8}
Let $s \geq 0$ and suppose $f,\,g\in W^{s + 3}(M)$
 and satisfies the cocycle equation
 \begin{align*}
   f \circ h_{L_1}^{(1)} - f = g\circ h_{L_2}^{(2)} - g\,.  
 \end{align*}
Then there is a solution $p\in W^s(M)$
such that
\begin{align*}
  p\circ h_{L_1}^{(1)}-p=g\quad \text{and}\quad p\circ h_{L_2}^{(2)}-p=f
\end{align*}
with the Sobolev estimates
\begin{align}\label{for:5}
    \norm{p}_s\leq C_s(L+\frac{1}{L})\max\{\norm{f}_{s+3}, \,\norm{g}_{s+3}\},\qquad \forall\,s>0.
\end{align}
\end{theorem}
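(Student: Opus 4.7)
The plan is the standard two-parameter approach: solve each one-parameter cohomological equation separately via Theorem~\ref{main_thm1}, glue the two solutions into a common primitive using Howe--Moore mixing, and derive the tame estimate \eqref{for:5} by applying the codim-one tame estimate of Theorem~\ref{eq:tame_X_V} to each equation. First, to verify that the obstructions vanish, I would decompose $L^2_0(M)$ into $\SL(2,\R) \times \SL(2,\R)$-irreducibles $H_1 \otimes H_2$; irreducibility of $\Gamma$ forces both factors to be non-trivial in each summand. Any $h^{(1)}_{L_1}$-invariant distribution in such a summand has the form $D \otimes \Phi$ with $D$ an $h^{(1)}_{L_1}$-invariant distribution on $H_1$; applying $D \otimes \mathrm{Id}_{H_2}$ to the cocycle equation shows that $D(g) \in H_2$ is $h^{(2)}_{L_2}$-invariant, which forces $D(g) = 0$ by Howe--Moore on the second factor. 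Hence $g$ lies in the annihilator of the $h^{(1)}_{L_1}$-invariant distributions, and symmetrically $f$ lies in the annihilator of the $h^{(2)}_{L_2}$-invariant distributions.

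Theorem~\ref{main_thm1} then produces $p \in W^s(M)$ with $p \circ h^{(1)}_{L_1} - p = g$. The cocycle equation implies that $p \circ h^{(2)}_{L_2} - p - f$ is $h^{(1)}_{L_1}$-invariant, so by Howe--Moore applied in the first factor together with the vanishing of the volume obstruction on $f$, it is zero after adjusting $p$ by an additive constant; hence the same $p$ solves both equations simultaneously.

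For the tame bound I would apply the $U \leftrightarrow V$ symmetric analogue of Theorem~\ref{eq:tame_X_V} to each equation (the swap is needed because $h^{(i)}_{L_i} = \exp(L_i U_i)$). Since $h^{(1)}_{L_1}$ acts trivially on the second factor, arbitrary differential operators in the second-factor generators commute with $h^{(1)}_{L_1}$ and may be applied to both sides of the first equation before invoking the tame estimate; the resulting inequality controls the $\{X_1, U_1\}$-derivatives of $p$ jointly with arbitrary second-factor derivatives in terms of matching Sobolev norms of $g$. Symmetrically, the second equation controls $\{X_2, U_2\}$-derivatives of $p$ jointly with arbitrary first-factor derivatives in terms of norms of $f$. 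Every monomial in the six generators $X_i, U_i, V_i$ that omits at least one of $V_1, V_2$ is therefore tamely bounded by one of these two estimates.

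The hard part is the remaining class of monomials containing both $V_1$ and $V_2$ at once, since $V_i$ is precisely the non-tame direction for equation $i$. I would treat these by a bootstrap: apply $V_1^{a_1}$ (which commutes with $h^{(2)}_{L_2}$) to the second equation and invoke its tame estimate on $V_1^{a_1} p$ to control the $\{X_2, U_2\}$-derivatives of $V_1^{a_1} p$ in terms of $V_1^{a_1} f$; symmetrically apply $V_2^{a_2}$ to the first equation to control the $\{X_1, U_1\}$-derivatives of $V_2^{a_2} p$ in terms of $V_2^{a_2} g$. Combining the two partial bounds, together with the commutator relations \eqref{eq:sl-com} inside each factor (commutators across factors being trivial) and Sobolev interpolation within each irreducible $H_1 \otimes H_2$ summand, reduces the mixed $V_1^{a_1} V_2^{a_2}$-derivative bound to the Sobolev norms $\|f\|_{s+3}$ and $\|g\|_{s+3}$. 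The delicate step is bookkeeping the $L$-powers produced by the two applications of Theorem~\ref{eq:tame_X_V} so that they collapse to the linear factor $L + L^{-1}$ claimed in \eqref{for:5}, and ensuring that the total derivative loss remains $3$, as permitted by the hypothesis $f, g \in W^{s+3}(M)$.
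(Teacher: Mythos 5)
Your existence argument (via direct-integral decomposition, Howe--Moore on the second factor to kill the obstructions, and adjustment by a constant to make the two primitives agree) is essentially what is proved in the cited reference of Damjanovic--Tanis; the paper does not re-prove it but quotes that result, so this part is fine, if longer than necessary.

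The tame estimate, however, has a genuine gap at exactly the point you flag as ``the hard part.'' You correctly observe that the codimension-one tame estimate of Theorem~\ref{eq:tame_X_V}, applied to the first equation, controls all monomials in the six generators that omit $V_1$, and applied to the second equation controls all monomials that omit $V_2$; what is left is the family $V_1^{a_1}V_2^{a_2}$ with both $a_1,a_2\geq 1$. Your proposed bootstrap does not close this. Applying $V_1^{a_1}$ to the second equation and then the tame estimate controls $\{X_2,U_2\}$-derivatives of $V_1^{a_1}p$ in terms of $V_1^{a_1}f$; applying $V_2^{a_2}$ to the first controls $\{X_1,U_1\}$-derivatives of $V_2^{a_2}p$ in terms of $V_2^{a_2}g$. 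Neither of these produces a bound on $V_1^{a_1}V_2^{a_2}p$: the missing direction in each partial bound is precisely $V_2$ (resp.\ $V_1$), and Sobolev interpolation cannot manufacture control in a derivative direction that appears in neither endpoint. So the mixed monomials remain uncontrolled, and the argument as written does not yield \eqref{for:5}.

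The idea you are missing is the one the paper uses to bypass mixed monomials entirely: an elliptic regularity theorem of the form $\norm{v}_{2m}\leq C_m(\norm{L_{2m}v}+\norm{v})$, where $L_{2m}=\sum_j Y_j^{2m}$ and $\{Y_j\}$ is a basis of $\mathfrak g$. Because $L_{2m}$ is a sum of pure $2m$-th powers of single generators, and because each generator of $\mathfrak{sl}(2,\R)\times\mathfrak{sl}(2,\R)$ lies in one $\SL(2,\R)$ factor, each term $Y_j^{2m}p$ can be estimated in $L^2$ from the equation in the \emph{opposite} factor using only the crude $s=0$ specialization of \eqref{for:6} (which gives $\norm{Y_j^{2m}p}\leq C(L+L^{-1})\norm{Y_j^{2m}g}_2$ or $\norm{Y_j^{2m}p}\leq C(L+L^{-1})\norm{Y_j^{2m}f}_2$). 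No mixed $V_1^{a_1}V_2^{a_2}$ terms and no codimension-one refinement are needed, and working at $s=0$ also sidesteps the translation $h_{-L/2}$ appearing in Theorem~\ref{main_thm1}, which your plan would otherwise have to commute past the Sobolev norm at positive $s$. Summing over the basis and invoking the elliptic regularity inequality then yields \eqref{for:5} directly.
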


\subsection{Direct decompositions of Sobolev space}\label{sec:3}
For any Lie group $G$ of type $I$, such as $\SL(2, \R)$ and $\SL(2, \R)\times \SL(2, \R)$,
with a unitary representation $\rho$, there is a decomposition of $\rho$ into a direct integral
\begin{align}\label{for:66}
 \rho=\int_Z\rho_zd\mu(z)
\end{align}
of irreducible unitary representations for some measure space $(Z,\mu)$ (we refer to
\cite[Chapter 2.3]{zimmer2013ergodic} or \cite{margulis1982finitely} for more detailed account of direct integral theory). All the operators in the enveloping algebra are decomposable with respect to the direct integral decomposition \eqref{for:66}. Hence there exists for all $s\in\R$ an induced direct
decomposition of the Sobolev spaces:
\begin{align}\label{for:67}
\mathcal{H}^s=\int_Z\mathcal{H}_z^sd\mu(z)
\end{align}
with respect to the measure $d\mu(z)$.

The existence of the direct integral decompositions
\eqref{for:66}, \eqref{for:67} allows us to reduce our analysis of the
cohomological equation to irreducible unitary representations. This point of view is
essential for our purposes. See for example, Theorem 1.1 of \cite{damjanovic2014cocycle}.

\section{Proof of Theorems~\ref{main_thm1} and \ref{main_thm3}}\label{sect:pf1}
We begin with Theorem~\ref{main_thm1}, which has already been proven
for the principal, complementary and mock discrete
series in Theorems~3.4 and 3.19 of \cite{flaminio2016effective}.
Next we discuss the model that we use to prove Theorem~\ref{main_thm1},
where we can handle all representation cases together, including the discrete series.
The estimates follow the approach in the proof of Theorem~3.4 of \cite{flaminio2016effective}.

\subsection{A Fourier model for $\SL(2, \R)$}

The Hilbert space models for the principal, complementary, discrete and mock discrete series are discussed in Appendix~\ref{sect:SL2R_reps}.  We use the representation parameter
\[
\nu := \sqrt{1 - \mu}\,,
\]
for convenience, and we denote the real part of $\nu$ by $\re\nu$.

The Fourier transform for functions in the line model $H_\mu$ of the principal or complementary series is defined by
\[
\hat f(\xi) = \int_{\R} f(x) e^{- \sqrt{-1} \xi x} dx\,.
\]
Following \cite{flaminio2016effective}, for functions in the upper half-plane model of the discrete series or mock discrete series,
also denoted by $H_\mu$, the Fourier transform is given by
\[
\hat f(\xi) := \int_\R f(x + \sqrt{-1}) e^{-\sqrt{-1} \xi (x + \sqrt{-1})} dx\,.
\]
Cauchy's theorem implies that $\hat f$ is supported on $\R^+$ when $f \in H_\mu^\infty$ (see Lemma~3.15 of \cite{flaminio2016effective}).
By direct computation from the vector field formulas given in Appendix~\ref{sect:SL2R_reps},
we have $\hat V = - \sqrt{-1} \xi$ and
\begin{equation}\label{eq:Fourier_vf}
\hat X := (\nu - 1) - 2\xi \frac{d}{d\xi}\,, \quad \hat U := \sqrt{-1}\left((\nu-1) \frac{d}{d\xi} - \xi\frac{d^2}{d\xi^2}\right)\,.
\end{equation}
In all cases (i.e. principal, complementary, discrete and mock discrete series), Lemma~3.1 and Lemma~3.16 of \cite{flaminio2014effective} give a
constant $C_{\re\nu} > 0$ such that
\begin{equation}\label{eq:norm_f}
\Vert f \Vert^2 = C_{\re\nu}^2 \int_{\R} |\hat f(\xi)|^2 \vert \xi\vert^{-\re\nu} d\xi\,.
\end{equation}

In the discrete series, the number $\re\nu = \nu$ may be large.  The measure then $\xi^{-\re\nu} d\xi$ created difficulties in \cite{flaminio2016effective} and \cite{tanis2014cohomological} that led to weak estimates with respect to loss of regularity for the solution of the cohomological equation in the discrete series.
By an appropriate change of variable, the above norm is evaluated by an integral with the measure $\xi^{-1} d\xi$ for all irreducible representation cases, so we can handle all cases simultaneously and obtain sharp non-tame estimates.

To this end, fix any irreducible, unitary representation of the regular representation.
For any $J \subseteq \R$, define
\[
\mathbb L_\nu^2(J) := L^2(J\,, C_{\re\nu} \times \vert \xi\vert^{-1} d\xi)\,,
\]
and let $\mathcal A: H_\mu \to \mathbb L_\nu^2(\R)$ be the unitary transformation given by
\begin{equation}\label{eq:def_A}
\mathcal A  f(\xi) = \hat f(\xi) \xi^{-(\nu -1)/2}\,.
\end{equation}
Furthermore, for any $W \in \sl(2, \R)$, define
\[
\mathcal W = \mathcal A W \mathcal A^{-1}\,,
\]
so
\begin{equation}\label{eq:mathcal_vf}
\mathcal V = -\sqrt{-1} \xi\,, \quad \mathcal X = -2\xi \frac{d}{d\xi}\,, \quad \mathcal U = \sqrt{-1}\left(\frac{\nu^2 - 1}{4\xi}  - \xi\frac{d^2}{d\xi^2}\right)\,.
\end{equation}
One can verify that these formulas give the usual commutation relations
\begin{equation}\label{eq:commute_mathcal}
[\mathcal U\,, \mathcal V] = \mathcal X\,,\quad [\mathcal V\,, \mathcal X] = 2 \mathcal V\,, \quad [\mathcal U\,, \mathcal X] = -2 \mathcal U\,.
\end{equation}
In light of \eqref{eq:f_norm_abstract} and \eqref{eq:norm_f}, for any $s > 0$, we have
\begin{equation}\label{eq:s-norm-A}
\Vert f \Vert_s^2 = C_{\re\nu} \int_{\R} |(I - \mathcal V^2 - \mathcal X^2 - \mathcal U^2)^{s/2} \mathcal A f(\xi)|^2 \ \vert \xi\vert^{-1} d\xi\,.
\end{equation}

\subsection{Proof of Theorem~\ref{main_thm2}}
As in \cite{flaminio2016effective}, we derive estimates for the solution to the cohomological equation of horocycle maps from estimates of the solution $f$ to the twisted equation
\begin{equation}\label{eq:twist-cohomological}
(V + \sqrt{-1} \lambda) f = g\,,
\end{equation}
where $\lambda \in \R^*$.
The distributions that are invariant under the operator $(V + \sqrt{-1} \lambda)$
have already been studied in \cite{flaminio2016effective}.
For any irreducible, unitary representation $H_\mu$ of $\SL(2, \R)$, define
\[
\begin{aligned} 
& \mathcal I^{\lambda}_\mu:= \left\{ D \in H_\mu^{-\infty} : D((V + \sqrt{-1} \lambda) f) = 0 \text{ for any } f\in H_\mu^\infty \right\}\,, \\ 
& Ann^{\lambda}_\mu := \left\{ f \in H_\mu^\infty : D(f) = 0 \text{ for any } D\in \mathcal I^\lambda_\mu \right\}\,.
\end{aligned}
\]
Lemmas~3.3 and 3.14 of \cite{flaminio2016effective} show that
for any $\epsilon > 0$, $\mathcal I^{\lambda}_\mu \subset H_\mu^{-(1/2+\epsilon)}$.  
For $s > 1/2$, define 
\[
Ann_\mu^{\lambda, s} := \left\{ f \in H_\mu^{s} : D(f) = 0 \text{ for any } D\in \mathcal I^{\lambda}_\mu \right\}\,.
\] 

We derive sharp bounds in Theorem~\ref{main_thm1} from the following sharp abounds on
solutions to the twisted equation \eqref{eq:twist-cohomological}.  The strategy of deriving bounds for maps for from those of the twisted equation was already used in \cite{flaminio2016effective}.
\begin{theorem}\label{main_thm2} 
For any $s \geq 0$, for any $\epsilon$,
there is a constant $C_{s, \epsilon} > 0$ such that the following holds.  
For any irreducible, unitary representation $H_\mu$ of $\SL(2, \R)$.  
For any $\lambda \in \R^*$ and
for any $g \in Ann^{\lambda, 2s+1}_\mu$, there is a solution
solution $f \in H^s_\mu$ to the twisted equation \eqref{eq:twist-cohomological}
such that
\[
\Vert f \Vert_s \leq C_{s, \epsilon} \frac{(1 + \vert \lambda \vert^{-s})}{\vert \lambda \vert} \Vert g \Vert_{2s + 1} \,.
\]
As above, the estimate is tame when the norm involves only the $X$ and $V$ derivatives,
\[
\begin{aligned}
\Vert (I - X^2 - V^2)^{s/2} f \Vert \leq \frac{C_{s}}{\vert \lambda \vert} \Vert (I - X^2 - V^2)^{(s + 1)/2} g \Vert \,.
\end{aligned}
\]
\end{theorem}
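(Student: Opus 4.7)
The plan is to work in the Fourier model $\mathcal A : H_\mu \to \mathbb L_\nu^2(\R)$ introduced above, where $V$ acts as multiplication by $-\sqrt{-1}\xi$. Under $\mathcal A$, the twisted equation $(V + \sqrt{-1}\lambda) f = g$ becomes the scalar division equation
\[
\sqrt{-1}(\lambda - \xi)\,\mathcal A f(\xi) = \mathcal A g(\xi),
\]
so the candidate solution is $\mathcal A f(\xi) = \mathcal A g(\xi) / (\sqrt{-1}(\lambda - \xi))$. As indicated in Lemmas~3.3 and 3.14 of \cite{flaminio2016effective}, the elements of $\mathcal I^\lambda_\mu$ correspond under $\mathcal A$ to evaluation (and, in the appropriate cases, finitely many of its derivatives) at $\xi = \lambda$, so the hypothesis $g \in Ann^{\lambda,\,2s+1}_\mu$ guarantees that $\mathcal A g$ vanishes to sufficient order at $\xi = \lambda$ for the quotient above to extend smoothly across the singularity.

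To obtain the Sobolev estimate I would use the norm identity \eqref{eq:s-norm-A} and split the integral $\int_{\R} |(I - \mathcal V^2 - \mathcal X^2 - \mathcal U^2)^{s/2} \mathcal A f(\xi)|^2\, \vert\xi\vert^{-1}\,d\xi$ into a ``far'' region $|\xi - \lambda| > |\lambda|/2$ and a ``near'' region $|\xi - \lambda| \leq |\lambda|/2$. On the far region the factor $(\lambda - \xi)^{-1}$ and all its $\xi$-derivatives are $O(|\lambda|^{-1})$, so after expanding the elliptic operator using \eqref{eq:mathcal_vf} and the Leibniz rule one obtains directly a bound of the form $|\lambda|^{-1} \Vert g \Vert_{2s+1}$. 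On the near region one Taylor-expands $\mathcal A g$ about $\xi = \lambda$; the vanishing guaranteed by the annihilation condition cancels the leading singularity, and the higher-order poles produced when the elliptic operator's derivatives land on $(\lambda-\xi)^{-1}$ are absorbed by matching powers of $(\xi-\lambda)$ from the Taylor remainder. Pointwise control of the Taylor data of $\mathcal A g$ at $\xi = \lambda$ is obtained by a one-dimensional Sobolev embedding, which is what costs $2s+1$ derivatives on $g$ and produces the $|\lambda|^{-s}$ factor after rescaling by $|\lambda|$ on the near region. Summing the two contributions gives the prefactor $(1 + |\lambda|^{-s})/|\lambda|$ in the statement.

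For the tame estimate I would drop the operator $\mathcal U$ and observe from \eqref{eq:mathcal_vf} that $\mathcal V = -\sqrt{-1}\xi$ is a multiplication operator and $\mathcal X = -2\xi\,d/d\xi$ is a homogeneous first-order operator in $\xi$; their commutators with division by $(\lambda - \xi)$ produce only bounded operators independent of $\nu$. In particular, the $(\nu^2 - 1)/(4\xi)$ term in $\mathcal U$ is precisely the term which becomes large in the discrete series, and its exclusion removes the need for the $(1 + |\lambda|^{-s})$ loss and also for the extra Sobolev embedding, yielding the stated bound with only $s+1$ derivatives on $g$.

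The main obstacle is the bookkeeping on the near region: one must exactly match the order of the pole created by applying $(I - \mathcal V^2 - \mathcal X^2 - \mathcal U^2)^{s/2}$ to $(\lambda - \xi)^{-1}$ with the vanishing order of $\mathcal A g$ coming from the annihilation condition, and keep all constants uniform in the representation parameter $\nu$. This uniformity is precisely the gain provided by the unitary transformation \eqref{eq:def_A}: the measure $|\xi|^{-1}d\xi$ is the same across principal, complementary, mock discrete and discrete series, so a single calculation handles every case and delivers the sharp loss of $2s+1$ derivatives simultaneously.
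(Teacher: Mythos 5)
Your overall architecture matches the paper's: pass to the model $\mathcal A : H_\mu \to \mathbb L_\nu^2(\R)$, reduce the twisted equation to the division $\mathcal A f = \sqrt{-1}\,\mathcal A g/(\xi - \lambda)$, split the real line into a region near the singularity and one away from it, use Lemma~\ref{lemm:XU-f:1} on the far region, and obtain the tame estimate by dropping $\mathcal U$. On the near region, however, your execution diverges. The paper first rescales $\xi \mapsto \lambda \xi$ so the singularity sits at $\xi = 1$, and then, using only the single condition $\mathcal A g(\lambda) = 0$, writes $f$ via the fundamental theorem of calculus as $\frac{\sqrt{-1}}{\lambda}\int_0^1 g'(\xi_t)\,dt$ with $\xi_t = t(\xi-1)+1$; the operators $\mathcal U^\beta\mathcal X^\alpha$ are then pushed under the integral (Lemma~\ref{lemm:XUg}), so no poles ever appear and the derivative count $\alpha + 2\beta + 1$ drops out automatically. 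Your proposal instead Taylor-expands $\mathcal A g$ to high order at $\xi = \lambda$ and matches vanishing orders against the poles produced by Leibniz; this can be made to work, but the bookkeeping you defer is precisely the hard part, and you would need an expansion to order roughly $2s$ together with a careful $L^2$ estimate of the remainder to land on the stated $2s+1$, whereas the FTC device requires none of that. Two imprecisions are worth flagging. First, $\mathcal I^\lambda_\mu$ is one-dimensional and corresponds to evaluation alone (see \eqref{eq:inv_dist_twist}); your parenthetical about derivative evaluations is not available from the hypothesis, and the argument must not rely on vanishing of order greater than one. Second, the factor $(1 + |\lambda|^{-s})$ is produced by the $\lambda$-rescaling (under $\xi \mapsto \lambda\xi$, $\mathcal U$ scales by $\lambda$ while $\mathcal X$, $\mathcal V$ do not), not by the $(\nu^2-1)/(4\xi)$ term in $\mathcal U$; the point of the unitary change of variable $\mathcal A$ is exactly that the $\nu$-dependence has already been made uniform across all series, so dropping $\mathcal U$ removes the $\lambda$-scaling penalty and halves the effective differential order, which is why the $X,V$-norm estimate is tame.
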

First let $s \in \N$.
By the triangle inequality, the commutation relations~\eqref{eq:sl-com}
and formula~\eqref{eq:s-norm-A},
there is a constant $C_s > 0$ such that
\begin{align}
\Vert f \Vert_{2s} & \leq C_s \sum_{\alpha + \beta + \gamma \leq 2s} \Vert U^\alpha X^\beta V^\gamma f \Vert  \notag \\
& = C_s \sum_{\alpha + \beta + \gamma \leq 2s} \Vert \mathcal U^\alpha \mathcal X^\beta \mathcal V^\gamma \mathcal A f \Vert_{\mathbb L_\nu^2(\R)}\,. \label{eq:f_est:0}
\end{align}

Because $\mathcal A$ is unitary, estimates of solutions to the twisted equation \eqref{eq:twist-cohomological} are equivalently
obtained in $\mathbb L_\nu^2(\R)$ by solving
\[
(\mathcal V + \sqrt{-1} \lambda) \mathcal A f = \mathcal A g
\]
using the vector fields \eqref{eq:mathcal_vf}.
It follows that
\begin{equation}\label{eq:Af_def}
\mathcal A f(\xi) = \sqrt{-1} \frac{\mathcal A g(\xi)}{\xi - \lambda}\,.
\end{equation}

Next, as in \cite{flaminio2016effective}, 
set $(\mathcal A f)_\lambda(\xi) = (\mathcal A f)(\lambda \xi)$ for any function $f$, 
which means
\[
(\mathcal A f)_\lambda(\xi) = \frac{\sqrt{-1}}{\lambda} \frac{(\mathcal A g)_\lambda(\xi)}{\xi - 1}\,.
\]
To simplify notation until the end of the argument, for any function $f$, set
\[
f = (\mathcal A f)_\lambda\,,
\]
so \eqref{eq:Af_def} is reduced to solving
\begin{equation}\label{eq:twist_est}
f(\xi) = \frac{\sqrt{-1}}{\lambda} \frac{g(\xi)}{\xi - 1}\,.
\end{equation}

Notice that for any $\sigma \in \N$, we have
\[
(\mathcal V + \sqrt{-1} \lambda) (\mathcal V^\sigma f) = \mathcal V^\sigma g\,,
\]
so
\begin{equation}\label{eq:V-commute:1}
\mathcal V^\sigma f(\xi) = \sqrt{-1} \frac{\mathcal V^\sigma g(\xi)}{\xi - \lambda}\,.
\end{equation}
Hence, we can obtain an estimate of \eqref{eq:f_est:0} by an estimate involving only the $\mathcal U$ and $\mathcal X$ derivatives.
In light of \eqref{eq:twist_est}, let $I = [\frac{1}{2}, \frac{3}{2}]$, so
\[
\Vert \mathcal U^\beta \mathcal X^{\alpha} f \Vert_{\mathbb L_\nu^2(\R)} \leq \Vert \mathcal U^\beta \mathcal X^{\alpha} f \Vert_{\mathbb L_\nu^2(\R\setminus I)} + \Vert \mathcal U^\beta \mathcal X^{\alpha} f \Vert_{\mathbb L_\nu^2(I)}\,.
\]
The next two propositions estimate each term of the above inequality.  
The below proposition does not use a distributional assumption on $g$.  

\begin{proposition}\label{prop:no_lambda}
For any $\alpha, \beta \in \N$,
There is a constant $C_{\alpha, \beta}^{(0)} > 0$ such that
\[
\Vert \mathcal U^\beta \mathcal X^{\alpha} f \Vert_{\mathbb L_\nu^2(\R\setminus I)} \leq \frac{C_{\alpha, \beta}^{(0)}}{\vert \lambda \vert} \sum_{\substack{j + k \leq \alpha +  \beta \\ j \leq \beta}} \Vert \mathcal U^j \mathcal X^k g \Vert_{\mathbb L_\nu^2(\R)}\,.
\]
\end{proposition}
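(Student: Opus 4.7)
The plan is to expand $\mathcal U^\beta \mathcal X^\alpha f$ by iterated application of the product rule, starting from the identity $f(\xi) = \tfrac{\sqrt{-1}}{\lambda}\phi_0(\xi)g(\xi)$ with $\phi_0(\xi) := 1/(\xi-1)$ from \eqref{eq:twist_est}. Let $\mathcal C$ denote the class of finite $\C$-linear combinations of rational functions $\xi^p/(\xi-1)^m$ with $p \geq 0$ and $p - m \leq -1$. Every element of $\mathcal C$ is uniformly bounded on $\R \setminus I$, because $|\xi-1| \geq 1/2$ there and $|\xi|^{p-m}\to 0$ as $|\xi|\to\infty$. The goal is to establish, by induction on $\alpha+\beta$, an expansion
\[
\mathcal U^\beta \mathcal X^\alpha f \;=\; \frac{\sqrt{-1}}{\lambda}\sum_{\substack{j+k \,\leq\, \alpha+\beta \\ j \,\leq\, \beta}} c_{jk}(\xi)\,\mathcal U^j \mathcal X^k g,\qquad c_{jk}\in\mathcal C,
\]
from which the proposition will follow by the triangle inequality in $\mathbb L_\nu^2(\R\setminus I)$ together with the uniform bound $\sup_{\R\setminus I}|c_{jk}| \leq C_{\alpha,\beta}^{(0)}$.

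The induction will use two product-rule identities. Since $\mathcal X = -2\xi\partial_\xi$ is a derivation, the ordinary Leibniz rule $\mathcal X(\phi h) = (\mathcal X\phi)h + \phi\,\mathcal X h$ applies. For $\mathcal U$, a direct computation from \eqref{eq:mathcal_vf} combined with the substitutions $-\sqrt{-1}\xi h'' = \mathcal U h - \sqrt{-1}\tfrac{\nu^2-1}{4\xi}h$ and $-2\sqrt{-1}\xi h' = \sqrt{-1}\,\mathcal X h$ yields
\[
\mathcal U(\phi h) \;=\; \phi\,\mathcal U h \;-\; \sqrt{-1}\,\xi\phi''\,h \;+\; \sqrt{-1}\,\phi'\,\mathcal X h,
\]
in which the $\sqrt{-1}\tfrac{\nu^2-1}{4\xi}\phi h$ contributions from $\mathcal U(\phi h)$ and $\phi\,\mathcal U h$ cancel exactly, so no new $1/\xi$ singularity is introduced. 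To bring each resulting term into the canonical ordering $\mathcal U^j\mathcal X^k g$, I will use the $\sl(2)$ identity $\mathcal X\mathcal U^j = \mathcal U^j\mathcal X + 2j\,\mathcal U^j$, which follows from \eqref{eq:commute_mathcal}.

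A short computation on $\phi = \xi^p/(\xi-1)^m$ confirms that $\mathcal X\phi$ is a combination of two rational functions with indices $(p,m)$ and $(p+1,m+1)$, preserving $p-m$ and keeping $p\geq 0$; and that $\xi\phi''$ and $\phi'$ each yield terms with $p' - m' = p - m - 1$ and $p' \geq 0$, the would-be $p' = -1$ contributions vanishing thanks to the $p(p-1)$ and $p$ prefactors. Since $\phi_0$ has $(p,m) = (0,1)$, the class $\mathcal C$ is stable under all operations appearing in the induction. Each $\mathcal U$ application increases the number of $\mathcal U$'s on $g$ by at most $1$ and the total order by at most $1$; each $\mathcal X$ application preserves the number of $\mathcal U$'s on $g$ and increases the total order by at most $1$. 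This yields the claimed index range $j \leq \beta$ and $j + k \leq \alpha + \beta$.

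The main technical point, and the chief obstacle, is the cancellation of $1/\xi$ singularities in the identity for $\mathcal U(\phi h)$. Without this cancellation the coefficients $c_{jk}$ would acquire negative powers of $\xi$ and blow up at $\xi = 0\in\R\setminus I$, forcing the right-hand side to include $\mathcal V$-derivatives of $g$ that the proposition does not permit. Beyond that, the argument reduces to a routine combinatorial bookkeeping to track the invariants $p \geq 0$ and $p - m \leq -1$ through the induction.
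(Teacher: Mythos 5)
Your proof is correct and follows essentially the same strategy as the paper: expand $\mathcal U^\beta\mathcal X^\alpha f$ via product-rule identities in the $\mathcal A$-model, exploit the exact cancellation of the $\frac{\nu^2-1}{4\xi}$ contribution in the $\mathcal U$-Leibniz rule so that no singularity at $\xi=0$ appears, keep track of the index constraints $j\le\beta$ and $j+k\le\alpha+\beta$, and bound the resulting rational coefficients uniformly on $\R\setminus I$. The only organizational difference is that the paper isolates a general Leibniz-type formula for $\mathcal U^\beta\mathcal X^\alpha(g_1g_2)$ as Lemma~\ref{lemm:XU-f:1} (which it reuses later) and then specializes $g_2=(\xi-1)^{-1}$, whereas you run a direct induction on $\alpha+\beta$ with the rational-function class $\mathcal C$ tracking the invariants $p\ge0$, $p-m\le-1$.
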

\begin{proof}
We start with a technical lemma.
\begin{lemma}\label{lemm:XU-f:1}
Let $\alpha, \beta \in \N$.
There are coefficients $(b_{j k l m n}^{(\alpha \beta)}) \subset \Z$
such that for any $g_1, g_2 \in H_\mu^\infty$,
\[
\mathcal U^\beta \mathcal X^\alpha (g_1 g_2) = i^\beta \sum_{\substack{j + k + l +n \leq \alpha +  \beta \\ j +l+ m \leq \beta}}  b_{j k l m n}^{(\alpha, \beta)} \left((\mathcal U^j \mathcal X^{k} g_1) ((\xi \frac{d^2}{d\xi^2})^l (\frac{d}{d\xi})^m \mathcal X^{n} g_2)\right)\,.
\]
\end{lemma}
\begin{proof}
By the product rule for ordinary differentiation, we get
\[
\begin{aligned}
\mathcal X (g_1 g_2)
& = (\mathcal X g_1) g_2 + g_1 (\mathcal X g_2)\,.
\end{aligned}
\]
Hence, for any $\alpha \in \N$, there are
coefficients $(a_{j k}^{(\alpha)} \subset \N$ such that
\begin{equation}\label{eq:X_alpha}
\mathcal X^\alpha (g_1 g_2) = \sum_{r + k = \alpha} a_{j k}^{(\alpha)} (\mathcal X^j g_1) (\mathcal X^k g_2)\,.
\end{equation}
Next, a short computation gives
\[
\begin{aligned}
\mathcal U (g_1 g_2)
& = \sqrt{-1} \left((\mathcal U g_1) g_2 + (\mathcal X g_1) (\frac{d}{d\xi} g_2) - g_1 (\xi \frac{d^2}{d\xi^2} g_2)\right)\,.
\end{aligned}
\]
Because
\[
[\xi \frac{d^2}{d\xi^2}\,, \frac{d}{d\xi}] = - \frac{d^2}{d\xi^2}\,,
\]
we get by induction that
\begin{equation}\label{eq:Leibnitz-U}
\mathcal U^\beta (g_1 g_2) = i^\beta \sum_{\substack{j + k + l \leq \beta \\ j+ l + m \leq \beta}}
b_{j k l m }^{(\beta)} (\mathcal U^j \mathcal X^k g_1) ((\xi \frac{d^2}{d\xi^2})^l (\frac{d}{d\xi})^m g_2)\,.
\end{equation}

By the above equality and \eqref{eq:X_alpha},
we get constants $(c_{j k l m n}^{(\alpha, \beta)}) \subset \Z$ such that
\[
\begin{aligned}
\mathcal U^\beta & \mathcal X^\alpha (g_1, g_2) = \sum_{n_1 + n_2 = \alpha} a_{n_1 n_2}^{(\alpha)} \mathcal U^\beta ((\mathcal X^{n_1} g_1) (\mathcal X^{n_2} g_2)) \\
& = i^\beta \sum_{n_1 + n_2 = \alpha} a_{n_1 n_2}^{(\alpha)} \sum_{\substack{j + k + l \leq \beta \\ j+ l + m\leq \beta}}
b_{j k l m}^{(\beta)} (\mathcal U^j \mathcal X^{k+n_1} g_1) ((\xi \frac{d^2}{d\xi^2})^l (\frac{d}{d\xi})^m \mathcal X^{n_2} g_2)  \\
& = i^\beta \sum_{\substack{j + k + l +n \leq \alpha +  \beta \\ j + l + m \leq \beta}}  c_{j k l m n}^{(\alpha, \beta)} (\mathcal U^j \mathcal X^{k} g_1) \left((\xi \frac{d^2}{d\xi^2})^l (\frac{d}{d\xi})^m \mathcal X^{n} g_2\right)\,.
\end{aligned}
\]
\end{proof}

We now finish the proof of Proposition~\ref{prop:no_lambda}.
In the above lemma, let $g = g_1$ and $g_2 = (\xi - 1)^{-1}$.
An induction argument shows there are coefficients $(c_{j, k}^{(l, m, n)})$ such that
\begin{equation}\label{eq:decomp_constant_fact}
(\xi \frac{d^2}{d\xi^2})^l (\frac{d}{d\xi})^m \mathcal X^{n} = \sum_{\substack{j \leq l + n \\ k \leq 2l + m + n \\ k - j \geq l+m}} c_{j, k}^{(l, m, n)} \xi^j (\frac{d}{d\xi})^{k}\,.
\end{equation}
Then from the definition of $\mathcal X$, and because
\[
(\frac{d}{d\xi})^j (\xi - 1)^{-1} = (-1) j! (\xi - 1)^{-(j +1)}\,,
\]
we get a constant $C_{l, m, n} > 0$ such that for any $\xi \in \R \setminus I$,
\[
\left|(\xi \frac{d^2}{d\xi^2})^l (\frac{d}{d\xi})^m \mathcal X^{n} \left(\frac{1}{\xi - 1}\right)\right| \leq C_{l, m, n} \frac{1}{\vert \xi - 1 \vert}\,.
\]
By the triangle inequality and Lemma~\ref{lemm:XU-f:1}, it follows that there is a constant $C_{\alpha, \beta} > 0$ such that
\begin{align}
\Vert & \mathcal U^\beta \mathcal X^\alpha f \Vert_{\mathbb L_\nu^2(\R\setminus I)} = \frac{1}{\vert \lambda \vert} \Vert \mathcal U^\beta \mathcal X^\alpha \frac{g(\xi)}{\xi - 1} \Vert_{\mathbb L_\nu^2(\R\setminus I)} \notag \\
& \leq \frac{C_{\alpha, \beta}}{\vert \lambda \vert} \sum_{\substack{j + k + l +n \leq \alpha +  \beta \\ j +l+ m \leq \beta}}  \vert c_{j k l m n}^{(\alpha, \beta)}\vert \Vert (\mathcal U^j \mathcal X^{k} g) \left((\xi \frac{d^2}{d\xi^2})^l (\frac{d}{d\xi})^m \mathcal X^{n} \frac{1}{\xi -1}\right) \Vert_{\mathbb L_\nu^2(\R\setminus I)} \notag \\
& \leq \frac{C_{\alpha, \beta}}{\vert \lambda \vert} \sum_{\substack{j + k \leq \alpha +  \beta \\ j \leq \beta}}  \Vert \frac{\mathcal U^j \mathcal X^{k} g}{\xi-1} \Vert_{\mathbb L_\nu^2(\R\setminus I)} \notag \\
& \leq \frac{C_{\alpha, \beta}}{\vert \lambda \vert} \sum_{\substack{j + k \leq \alpha +  \beta \\ j \leq \beta}}  \Vert \mathcal U^j \mathcal X^{k} g\Vert_{\mathbb L_\nu^2(\R\setminus I)}\,. \label{eq:twist_away1}
\end{align}
The proof of Proposition~\ref{prop:no_lambda} is now complete.
\end{proof}

In the next proposition, we use the distributional assumption to estimate $f$ over the interval $I$.      
Specifically, it is shown in \cite{flaminio2016effective} that the space $\mathcal I_\mu^1$ is one-dimensional and spanned by the functional 
\begin{equation}\label{eq:inv_dist_twist} 
\mathcal D(f) = f(1)\,.
\end{equation} 
From 
\[
\frac{d}{d\xi} = \frac{1}{-2\xi} \mathcal X 
\]
and the Sobolev embedding theorem, 
it follows that $\mathcal D \in H_\mu^{-(1/2+\epsilon)}$ for any $\epsilon > 0$.  
See Section 3.2.1 of \cite{flaminio2016effective} for details.  

\begin{proposition}\label{prop:w_lambda}   
For every $\alpha, \beta \in \N$, there is a constant $C_{\alpha, \beta}^{(1)} > 0$ such that 
for any $g \in Ann_\mu^{1, \alpha + 2\beta + 1}$, 
\[
\Vert \mathcal U^\beta \mathcal X^{\alpha} f \Vert_{\mathbb L_\nu^2(I)} \leq \frac{C_{\alpha, \beta}^{(1)}}{\vert \lambda \vert} \sum_{j + k \leq \alpha + 2\beta + 1} \Vert \mathcal U^j \mathcal X^k g \Vert_{\mathbb L_\nu^2(\R)}\,.
\]
\end{proposition}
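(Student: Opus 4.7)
By Lemmas~3.3 and 3.14 of \cite{flaminio2016effective} (as recalled just above the statement), the space $\mathcal I_\mu^1$ is one-dimensional and spanned by the point evaluation $\mathcal D(f)=f(1)$. After the rescaling $(\mathcal A f)_\lambda$ introduced earlier, the hypothesis $g\in Ann_\mu^{1,\alpha+2\beta+1}$ is therefore equivalent to the single pointwise condition $g(1)=0$. The central idea is to exploit this vanishing via Taylor's theorem with integral remainder to cancel the singularity of
\[
f(\xi)\;=\;\frac{\sqrt{-1}}{\lambda}\,\frac{g(\xi)}{\xi-1}
\]
at $\xi=1$ and so reduce the estimate on $I=[1/2,3/2]$ to an ordinary Sobolev bound on $g$.

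Concretely, on $I$ one has
\[
f(\xi)\;=\;\frac{\sqrt{-1}}{\lambda}\cdot\frac{g(\xi)-g(1)}{\xi-1}\;=\;\frac{\sqrt{-1}}{\lambda}\int_{0}^{1}g'\bigl(1+t(\xi-1)\bigr)\,dt,
\]
so that $f$ extends smoothly across $\xi=1$, and differentiation under the integral sign gives
\[
f^{(k)}(\xi)\;=\;\frac{\sqrt{-1}}{\lambda}\int_{0}^{1}t^{k}\,g^{(k+1)}\bigl(1+t(\xi-1)\bigr)\,dt,\qquad k\geq 0.
\]
Since $I$ is bounded away from $0$, the weight $|\xi|^{-1}$ in $\mathbb L_\nu^{2}(I)$ is bounded above and below, and from the formulas $\mathcal X=-2\xi\,d/d\xi$ and $\mathcal U=\sqrt{-1}\bigl((\nu^{2}-1)/(4\xi)-\xi\,d^{2}/d\xi^{2}\bigr)$ one expresses $\mathcal U^{\beta}\mathcal X^{\alpha}f$ as a linear combination of ordinary derivatives $f^{(k)}$ with $k\leq\alpha+2\beta$, and dually writes $g^{(k+1)}$ for $k+1\leq\alpha+2\beta+1$ as a linear combination of $\mathcal U^{j}\mathcal X^{\ell}g$ with $j+\ell\leq\alpha+2\beta+1$. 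Bounding $\Vert f^{(k)}\Vert_{L^{2}(I)}$ via Minkowski's integral inequality and the change of variable $s=1+t(\xi-1)$ produces the integrable factor $t^{k-1/2}$ and yields
\[
\Vert f^{(k)}\Vert_{L^{2}(I)}\;\leq\;\frac{C_{k}}{|\lambda|}\,\Vert g^{(k+1)}\Vert_{L^{2}(I)}.
\]
Chaining these three bounds produces the desired inequality with the decisive $1/|\lambda|$ prefactor and the Sobolev demand of order $\alpha+2\beta+1$.

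\textbf{Main obstacle.} The delicate point is uniform control of the constant $C^{(1)}_{\alpha,\beta}$ across all irreducible representations: in the discrete series $|\nu|$ may be arbitrarily large, and the zeroth-order term $(\nu^{2}-1)/(4\xi)$ of $\mathcal U$ produces polynomial $\nu$-growth each time one translates between $\mathcal U$ and the ordinary second derivative. The bookkeeping in the two conversions between $(\mathcal U,\mathcal X)$ and ordinary derivatives must therefore be arranged so that the $\nu$-powers introduced on the two sides cancel exactly. The cleanest way to enforce this, and probably the route I would carry out, is to follow the pattern of the proof of Proposition~\ref{prop:no_lambda}: apply Lemma~\ref{lemm:XU-f:1} directly to the product $g(\xi)\cdot(\xi-1)^{-1}$, and then, instead of bounding the singular factor $(\xi-1)^{-(m+1)}$ on $I$ by a constant as was done in the region $\R\setminus I$, substitute the Taylor--integral representation for $g(\xi)/(\xi-1)$ whenever a singular factor would otherwise fail to be $\mathbb L_\nu^{2}(I)$-integrable, so that the vanishing $g(1)=0$ is used exactly where it is needed and the $\nu$-dependent constants appear in matching combinations on both sides of the estimate.
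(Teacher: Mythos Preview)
Your starting point --- the integral representation $f(\xi)=\frac{\sqrt{-1}}{\lambda}\int_0^1 g'(\xi_t)\,dt$ with $\xi_t=1+t(\xi-1)$, followed by Minkowski and the change of variable --- is exactly what the paper does, and you are right that $\nu$-uniformity is the whole point.  But the route you sketch in the Proof Plan does not close the gap.  When you expand $\mathcal U^\beta\mathcal X^\alpha f$ in ordinary derivatives $f^{(k)}$, each occurrence of the zeroth-order term $(\nu^2-1)/(4\xi)$ in $\mathcal U$ contributes a factor $(\nu^2-1)$, so the coefficients carry powers up to $|\nu|^{2\beta}$.  The converse step, writing $g^{(k+1)}$ in terms of $\mathcal U^j\mathcal X^\ell g$, actually uses only $\frac{d}{d\xi}=-\frac{1}{2\xi}\mathcal X$ and is already $\nu$-free, so there is nothing on that side to cancel against; your description of ``matching $\nu$-powers from both conversions'' does not correspond to what happens.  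Your fallback --- apply Lemma~\ref{lemm:XU-f:1} to the product $g\cdot(\xi-1)^{-1}$ and then substitute the Taylor remainder ``where needed'' --- breaks for a different reason: after that Leibniz expansion the singular factors $(\xi-1)^{-(m+1)}$ are paired with $\mathcal U^j\mathcal X^k g$, and these functions need not vanish at $\xi=1$, so the integral representation no longer removes the singularity.

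The paper avoids the issue by never separating $\nu$ from $\mathcal U$.  Instead of passing through ordinary derivatives, it computes directly how $\mathcal X$ and $\mathcal U$, acting in the outer variable $\xi$, transform into operators in the inner variable $\xi_t$: one finds
\[
\mathcal X f(\xi)=\frac{\sqrt{-1}}{\lambda}\int_0^1[W_1(t)g'](\xi_t)\,dt,\qquad
\mathcal U f(\xi)=\frac{\sqrt{-1}}{\lambda\,\xi}\int_0^1[W_0(t)g'](\xi_t)\,dt,
\]
with $W_1(t)=(1-\tfrac{1-t}{\xi})\mathcal X$ and $W_0(t)=\xi\,\mathcal U - \sqrt{-1}\,\tfrac{(t-1)^2}{4\xi^2}(\mathcal X^2+2\mathcal X)$, built from $\mathcal U,\mathcal X,\xi,t$ with no explicit $\nu$.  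Iterating this (Lemma~\ref{lemm:XUg}) writes $\mathcal U^\beta\mathcal X^\alpha f$ as an integral of words in $W_0,W_1$ applied to $g'$; on $I$ all $\xi^{\pm1}$ coefficients are bounded, and replacing $g'=-\tfrac{1}{2\xi}\mathcal X g$ yields the bound with loss $\alpha+2\beta+1$.  If you wish to rescue your own scheme, the missing ingredient is the Casimir identity $-4\sqrt{-1}\,\xi\,\mathcal U+\mathcal X^2+2\mathcal X=(\nu^2-1)\,\mathrm{Id}$, valid as an operator equation on $H_\mu$; it lets you trade each residual factor $(\nu^2-1)$ for two additional $(\mathcal U,\mathcal X)$-derivatives of $g$, and a count shows this recovers exactly the order $\alpha+2\beta+1$ on the right-hand side.
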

\begin{proof}
First let
\[
\xi_t = t(\xi - 1) + 1\,. 
\] 
Notice that $g \in Ann_\mu^{1, \alpha + 2\beta + 1}$ implies $g(1) = 0$.    
Then as in Lemma~3.5 of \cite{flaminio2016effective},  
by formula \eqref{eq:twist_est} and the fundamental theorem of calculus we get that for any $\xi > 0$, 
\[ 
f(\xi) = \frac{\sqrt{-1}}{\lambda} \int_0^1 g'(\xi_t) dt \,.
\]

We now obtain a formula for $\mathcal U^\beta \mathcal X^{\alpha} f$, where $\alpha + \beta > 0$.

\begin{lemma}\label{lemm:XUg}
For any $t \in [0, 1]$, let
\[
\begin{aligned}
& W_0(t) := \xi \mathcal U  - \sqrt{-1} \frac{(t-1)^2}{4 \xi^2} (\mathcal X^2 + 2 \chi)\,, \\
& W_1(t) := (1 - \frac{1 - t}{\xi}) \mathcal X\,.
\end{aligned}
\]
For any $\beta \in \N\setminus\{0\}$, let
\[
\mathcal L^{(\beta)} :=
\left\{
\begin{aligned}
& \{(0, 0, 0)\} & \textrm{ if } \beta = 0\,, \\
& \left\{\mathbf l = (l_0, l_1, l_2) \in \N^2 : l_0 + l_1 + l_3 = \beta \,, l_0 \geq 1\right\} & \textrm{otherwise}\,.
\end{aligned}
\right.
\]
and for $\mathbf l = (l_0, l_1, l_2) \in \mathcal L^{(\beta)}$,
let $\mathcal L_{\mathbf l}^{(\beta)}$ be the set of all sequences of length $l_0 + l_1+l_3$ consisting of $l_0$ 0's, $l_1$ 1's and $l_2$ 2's.

Then for any $\alpha, \beta \in \N$ and for any $g \in Ann_\mu^{1, \alpha + 2\beta + 1}$, we have
\[
\begin{aligned}
\mathcal U^\beta \mathcal X^{\alpha}  f(\xi) & = \frac{\sqrt{-1}}{\lambda} \xi^{-\beta} \sum_{\substack{\mathbf l \in \mathcal L^{(\beta)} \\ (s_k) \in \mathcal L_{\mathbf l}^{(\beta)}}} c_{(s_k)} \int_0^1 [\prod_{k = 1}^{\beta} W_{s_k}(t) \left((1 - \frac{1 - t}{\xi}) \mathcal X\right)^\alpha   g'](\xi_t) dt\,.
\end{aligned}
\]
\end{lemma}
\begin{proof}
Hence, as in formula~(48) of \cite{flaminio2016effective}, we have
\begin{align}
\mathcal X  f(\xi)
& = \frac{\sqrt{-1}}{\lambda} \int_0^1 (-2\xi \frac{d}{d\xi})  g'(\xi_t) dt \notag \\
& = \frac{\sqrt{-1}}{\lambda} \int_0^1 -2[(\xi - (1 - t)) \frac{d}{d\xi}  g'](\xi_t) dt \notag \\
& = \frac{\sqrt{-1}}{\lambda} \int_0^1 [(-2\xi\frac{d}{d\xi}) + 2 (1 - t)\frac{d}{d\xi})  g'](\xi_t) dt \notag \\
& = \frac{\sqrt{-1}}{\lambda} \int_0^1 [(\mathcal X - \frac{1 - t}{\xi}(-2 \xi \frac{d}{d\xi})  g'](\xi_t) dt \notag \\
& = \frac{\sqrt{-1}}{\lambda} \int_0^1 [(1 - \frac{1 - t}{\xi}) \mathcal X  g'](\xi_t) dt \label{eq:X_g:1}
\end{align}
Hence, for any $\alpha \in \N$, we have
\begin{equation}\label{eq:X_g:2}
\eqref{eq:X_g:1} = \frac{\sqrt{-1}}{\lambda} \int_0^1 [\left((1 - \frac{1 - t}{\xi}) \mathcal X\right)^\alpha  g'](\xi_t) dt\,.
\end{equation}

Next, we have
\begin{equation}\label{eq:U-g}
\mathcal U  f(\xi) = \frac{\sqrt{-1}}{\lambda} \int_0^1 \sqrt{-1} \left(\frac{\nu^2 - 1}{4\xi} - \xi \frac{d^2}{d\xi^2}\right)  g'(\xi_t)\,.
\end{equation}
Using $\xi t = \xi_t - (1 - t)$, we get
\[
\frac{\nu^2 - 1}{4\xi} = t \left(\frac{\nu^2 - 1}{4 \xi_t} - \frac{(\nu^2 - 1)(t-1)}{4\xi_t(\xi_t + t-1)}\right)\,,
\]
and
\[
\xi \frac{d^2}{d\xi^2}  g'(\xi_t) = t\left( [\xi\frac{d^2}{d\xi^2}  g'](\xi_t) + (t - 1)[\frac{d^2}{d\xi^2}  g'](\xi_t)\right)\,.
\]
Then by the above two equalities, we have
\begin{align}
\eqref{eq:U-g} & = \frac{\sqrt{-1}}{\lambda} \int_0^1 t [\sqrt{-1}(\frac{\nu^2 - 1}{4\xi} - \xi \frac{d^2}{d\xi^2})  g'](\xi_t) dt \notag \\
& - \frac{\sqrt{-1}}{\lambda} \int_0^1 i t [\frac{t - 1}{\xi + t - 1} \left(\frac{\nu^2 - 1}{4\xi} + (\xi + t - 1) \frac{d^2}{d\xi^2}\right)  g'](\xi_t) dt \notag \\
& = \frac{\sqrt{-1}}{\lambda} \int_0^1 t [\mathcal U  g'](\xi_t) - t [\frac{t - 1}{\xi + t - 1} \left(\mathcal U + \sqrt{-1} (t - 1) \frac{d^2}{d\xi^2}\right)  g'](\xi_t) dt \notag \\
& = \frac{\sqrt{-1}}{\lambda} \int_0^1 t [\frac{\xi}{\xi + t-1} \mathcal U  - \sqrt{-1} \frac{(t-1)^2}{\xi+t+1} \frac{d^2}{d\xi^2}  g'](\xi_t) dt \notag \\
& = \frac{\sqrt{-1}}{\lambda} \int_0^1 [\frac{t}{\xi + t-1} \left(\xi \mathcal U  - \sqrt{-1} (t-1)^2 \frac{d^2}{d\xi^2}\right)  g'](\xi_t) dt\,. \label{eq:Ug:2}
\end{align}
Observe
\[
\frac{d^2}{d\xi^2} = \frac{1}{4 \xi^2} (\mathcal X^2 + 2 \chi)\,,
\]
so we conclude
\begin{align}
\eqref{eq:Ug:2} & = \frac{\sqrt{-1}}{\lambda} \int_0^1 [\frac{t}{\xi + t-1} \left(\xi \mathcal U  - \sqrt{-1} \frac{(t-1)^2}{4 \xi^2} (\mathcal X^2 + 2 \chi) \right)  g'](\xi_t) dt \notag \\
& = \frac{\sqrt{-1}}{\lambda} \int_0^1 \frac{1}{\xi} [\left(\xi \mathcal U  - \sqrt{-1} \frac{(t-1)^2}{4 \xi^2} (\mathcal X^2 + 2 \chi) \right)  g'](\xi_t) dt  \,. \label{eq:Uf_final}
\end{align}

Now we compute the formula for $\mathcal U^\beta f(\xi)$.
Observe that for any $m\in \Z$ and for any $\xi \in I$, we have
\begin{equation}\label{eq:commutation-xi}
\begin{aligned}
& [\mathcal U\,, \xi^m]  = \sqrt{-1} \left(m \xi^{m-1} \mathcal X  - m(m-1) \xi^{m-1} \right)\,, \\
& [\mathcal X\,, \xi^m]  = -2m \xi^{m} \,.
\end{aligned}
\end{equation}
Then for $W_0, W_1$ as in the statement of the lemma,
by an induction argument using the above commutation relations and by \eqref{eq:Uf_final},
we get
\[
\mathcal U^\beta  f(\xi) = \frac{\sqrt{-1}}{\lambda \xi^{\beta}}  \sum_{\substack{\mathbf l \in \mathcal L^{(\beta)} \\ (s_k) \in \mathcal L_{\mathbf l}^{(\beta)}}} c_{(s_k)} \int_0^1 [\prod_{k = 1}^{\beta} W_{s_k}(t)  g'](\xi_t) dt\,,
\]
where $c_{(s_k)} \in \C$ is a constant depending on the sequence $(s_k) \in \mathcal L_{\mathbf l}^{(\beta)}$.

The lemma follows from the above equality and formula \eqref{eq:X_g:2}.
\end{proof}

\[
\begin{aligned}
\mathcal U^\beta \mathcal X^{\alpha}  f(\xi) & = \frac{\sqrt{-1}}{\lambda} \xi^{-\beta} \sum_{\substack{\mathbf l \in \mathcal L^{(\beta)} \\ (s_k) \in \mathcal L_{\mathbf l}^{(\beta)}}} c_{(s_k)} \int_0^1 [\prod_{k = 1}^{\beta} W_{s_k}(t) \left((1 - \frac{1 - t}{\xi}) \mathcal X\right)^\alpha  g'](\xi_t) dt\,.
\end{aligned}
\]
By the above lemma and Minkowski's inequality, and because the interval $I$ is bounded away from zero, we get a constant $C_\beta > 0$ such that
\begin{align}
\Vert & \mathcal U^\beta \mathcal X^{\alpha}  f \Vert_{\mathbb L_\nu^2(I)} \notag \\
& \leq \frac{C_\beta}{\vert\lambda\vert}  \sum_{\substack{\mathbf l \in \mathcal L^{(\beta)} \\ (s_k) \in \mathcal L_{\mathbf l}^{(\beta)}}} \Vert  \int_0^1 [\prod_{k = 1}^{\beta} W_{s_k}(t) \left((1 - \frac{1 - t}{\xi}) \mathcal X\right)^\alpha  g'](\xi_t) dt\Vert_{\mathbb L_\nu^2(I)} \notag \\
& \leq \frac{C_\beta}{\vert\lambda\vert}  \sum_{\substack{\mathbf l \in \mathcal L^{(\beta)} \\ (s_k) \in \mathcal L_{\mathbf l}^{(\beta)}}}  \int_0^1 \Vert  [\prod_{k = 1}^{\beta} W_{s_k}(t) \left((1 - \frac{1 - t}{\xi}) \mathcal X\right)^\alpha  g'](t(\xi - 1) + 1)\Vert_{\mathbb L_\nu^2(I)} dt\,. \label{eq:f_I:4}
\end{align}
Because $\frac{d}{d\xi} = \frac{-1}{2\xi} \mathcal X$, it follows that
\begin{align}
\eqref{eq:f_I:4} & = \frac{C_\beta}{\vert\lambda\vert}  \sum_{\substack{\mathbf l \in \mathcal L^{(\beta)} \\ (s_k) \in \mathcal L_{\mathbf l}^{(\beta)}}}  \int_0^1 \notag \\
&\times \Vert  [\prod_{k = 1}^{\beta} W_{s_k}(t) \left((1 - \frac{1 - t}{\xi}) \mathcal X\right)^\alpha  \frac{1}{2\xi}\mathcal X g](t(\xi - 1) + 1)\Vert_{\mathbb L_\nu^2(I)} dt \,. \label{eq:f_I:5}
\end{align}
Notice that for any $\xi \in I$ and any $t \in [0, 1]$, there is a constant $C > 0$, depending only on $I$, such that
\[
\frac{1}{\xi_t} > C\,.
\]
Then using the commutation relations from \eqref{eq:commutation-xi} again and expanding the expression, we get
a constant $C_{\alpha, \beta} > 0$ such that
\[
\begin{aligned}
\eqref{eq:f_I:5} & \leq \frac{C_{\alpha, \beta}}{\vert \lambda\vert} \int_0^1 \sum_{\substack{j + k \leq \alpha + 2\beta + 1\\ j \leq \beta}} \Vert \mathcal U^j \mathcal X^k g \Vert_{\mathbb L_\nu^2(\R)} dt  \\
& \leq \frac{C_{\alpha, \beta}}{\vert \lambda\vert} \sum_{\substack{j + k \leq \alpha + 2\beta + 1\\ j \leq \beta}} \Vert \mathcal U^j \mathcal X^k g \Vert_{\mathbb L_\nu^2(\R)}\,.
\end{aligned}
\]
\end{proof}

\begin{proof}[Proof of Theorem~\ref{main_thm2}]
By Propositions~\ref{prop:no_lambda} and \ref{prop:w_lambda},
we get a constant $C_{\alpha, \beta} > 0$ such that
\begin{align}
\Vert \mathcal U^\beta & \mathcal X^\alpha f \Vert_{\mathbb L_\nu^2(\R)} \leq \Vert \mathcal U^\beta \mathcal X^\alpha f \Vert_{\mathbb L_\nu^2(\R\setminus I)} + \Vert \mathcal U^\beta \mathcal X^\alpha f \Vert_{\mathbb L_\nu^2(I)} \notag \\
& \leq \frac{C_{\alpha, \beta}}{\vert \lambda \vert} \left(\sum_{\substack{j + k \leq \alpha +  \beta \\ j \leq \beta}} \Vert \mathcal U^j \mathcal X^k g \Vert_{\mathbb L_\nu^2(\R)} + \sum_{\substack{j + k \leq \alpha + 2\beta + 1\\ j \leq \beta}} \Vert \mathcal U^j \mathcal X^k g \Vert_{\mathbb L_\nu^2(\R)} \right) \notag \\
& \leq \frac{C_{\alpha, \beta}}{\vert \lambda \vert} \sum_{\substack{j + k \leq \alpha + 2\beta + 1\\ j \leq \beta}} \Vert \mathcal U^j \mathcal X^k g \Vert_{\mathbb L_\nu^2(\R)}\,. \label{eq:twist_final:1}
\end{align}

The above gives the estimate in Theorem~\ref{main_thm2} for functions $(\mathcal A f)_\lambda$,
where $(\mathcal A f)_\lambda(\xi) =  (\mathcal A f)(\lambda \xi)$.
For the general result, we argue as in formula~(58) of \cite{flaminio2016effective}.
Observe that on smooth functions $f$, we have
\[
\mathcal U^\beta \mathcal X^\alpha  f_\lambda = \lambda^\beta (\mathcal U^\beta \mathcal X^\alpha  f)_\lambda\,.
\]
Hence,
\begin{align}
\Vert \mathcal U^\beta \mathcal X^\alpha  f \Vert_{\mathbb L_\nu^2( \R)} & = \vert \lambda\vert^{-\beta} \Vert (\mathcal U^\beta \mathcal X^\alpha  f_\lambda)_{1/\lambda} \Vert_{\mathbb L_\nu^2( \R)} \notag \\
& = \vert \lambda\vert^{-\beta + 1/2} \Vert \mathcal U^\beta \mathcal X^\alpha  f_\lambda \Vert_{\mathbb L_\nu^2( \R)} \notag \\
& \leq \frac{C_{\alpha, \beta}}{\vert \lambda \vert} \vert \lambda\vert^{-\beta + 1/2} \sum_{\substack{j + k \leq \alpha + 2\beta + 1\\ j \leq \beta}} \Vert \mathcal U^j \mathcal X^k  g_\lambda \Vert_{\mathbb L_\nu^2( \R)} \notag \\
& \leq \frac{C_{\alpha, \beta}}{\vert \lambda \vert} \vert \lambda\vert^{-\beta + 1/2} \sum_{\substack{j + k \leq \alpha + 2\beta + 1\\ j \leq \beta}} \vert \lambda \vert^{j} \Vert (\mathcal U^j \mathcal X^k  g)_\lambda \Vert_{\mathbb L_\nu^2( \R)} \notag \\
& \leq \frac{C_{\alpha, \beta}}{\vert \lambda \vert}  \sum_{\substack{j + k \leq \alpha + 2\beta + 1\\ j \leq \beta}} \vert \lambda \vert^{j - \beta + 1/2} \Vert (\mathcal U^j \mathcal X^k  g)_\lambda \Vert_{\mathbb L_\nu^2( \R)} \notag \\
& = \frac{C_{\alpha, \beta}}{\vert \lambda \vert}  \sum_{\substack{j + k \leq \alpha + 2\beta + 1\\ j \leq \beta}} \vert \lambda \vert^{j - \beta} \Vert \mathcal U^j \mathcal X^k  g \Vert_{\mathbb L_\nu^2( \R)} \notag \\
& \leq \frac{C_{\alpha, \beta}}{\vert \lambda \vert} (1 + \vert \lambda \vert^{-\beta}) \sum_{\substack{j + k \leq \alpha + 2\beta + 1\\ j \leq \beta}} \Vert \mathcal U^j \mathcal X^k  g \Vert_{\mathbb L_\nu^2( \R)}\,. \label{eq:tame_X-V}
\end{align}

Note that $\mathcal V$ commutes with $(\mathcal V + \sqrt{-1} \lambda)$ (see also equation~\eqref{eq:V-commute:1}).
Replacing functions $ f$ with $\mathcal V^\gamma  f$ in \eqref{eq:tame_X-V},
we get a constant $C_{\alpha} > 0$ such that
\[
\Vert \mathcal X^\alpha \mathcal V^\gamma  f \Vert_{\mathbb L_\nu^2( \R)} \leq \frac{C_{\alpha}}{\vert \lambda \vert} \sum_{k \leq \alpha+1} \Vert \mathcal X^k \mathcal V^\gamma  g \Vert_{\mathbb L_\nu^2( \R)} \,.
\]

Recall that we simplified notation by writing $f = \mathcal A f$, and we have an expression \eqref{eq:norm_f} for converting Sobolev norms in $\Vert \cdot \Vert$ to Sobolev norms in $\mathbb L_2^\nu(\R)$.  
Then using the commutation relations given in \eqref{eq:sl-com},
it follows that for any $s \in \N$, there is a constant $C_s > 0$ such that
\begin{equation}\label{eq:X-V_estimate_tame}
\Vert (I - X^2 - V^2)^{s} f \Vert \leq \frac{C_{s}}{\vert \lambda \vert} \Vert (I - X^2 - V^2)^{(2s+1)/2} g \Vert
\end{equation}
By interpolation, the above estimate holds for any $s \geq 0$, which is the second estimate in Theorem~\ref{main_thm2}.

Next, we consider the full Sobolev norm.  As above, for any $s \in \N$,
there is a constant $C_s > 0$ such that
\[
\begin{aligned}
\Vert f \Vert_{2s} & \leq \sum_{\alpha + \beta + \gamma \leq 2s} \Vert U^\beta X^\alpha V^\gamma f \Vert \\
& \leq C_s \sum_{\alpha + \beta + \gamma \leq 2s} \frac{(1 + \vert \lambda \vert^{-\beta})}{\vert \lambda \vert} \sum_{\substack{j + k \leq \alpha + 2\beta + 1\\ j \leq \beta}} \Vert  U^j X^k V^\gamma g \Vert \\
& \leq C_s \frac{(1 + \vert \lambda \vert^{-2s})}{\vert \lambda \vert} \Vert g \Vert_{4s + 1}\,.
\end{aligned}
\]
The case for $s \geq 0$ now follows by interpolation.
The proof of Theorem~\ref{main_thm2} is now complete.
\end{proof}

\subsection{Proof of Theorem~\ref{main_thm1}}
We adapt the argument given in \cite{flaminio2016effective} to our model using the vector fields $\mathcal U, \mathcal X$ and $\mathcal V$.
Our estimate for Theorem~\ref{main_thm1} is stronger than the one in \cite{flaminio2016effective} (i.e. Theorem~7.2 of \cite{flaminio2016effective}), because it is deduced using our stronger estimate for the twisted equation obtained in Theorem~\ref{main_thm2}.

Sharp, tame Hilbert Sobolev estimates for solutions of the cohomological equation of the horocycle flow was previously obtained by Flaminio and Forni in \cite{flaminio2003invariant}.  In particular, they defined a Green operator $G_\mu^V$ for the horocycle flow on the space of coboundaries in $H_\mu$ for the horocycle flow,
which is uniquely defined up to additive constants on smooth functions by the cohomological equation
\[
V G_\mu^V(g) = g
\]
for the horocycle flow.
Let $G_\mu^{\mathcal V}$ be defined by $G_\mu^{\mathcal V}(\mathcal A g) := \mathcal A G_\mu^V(g) \in \mathbb L_\nu^2(\R)$.
Then
\[
\mathcal V G_\mu^{\mathcal V}(\mathcal A g) = \mathcal A g\,.
\]
Recalling that $\mathcal A$ is unitary, we simplify notation as in the
previous section and write $g = \mathcal A g$.
Because $\mathcal V = -\sqrt{-1} \xi$\,, we get
\begin{equation}\label{eq:flow_cobound}
G_\mu^{\mathcal V}(g) = \frac{\sqrt{-1}}{\xi} g(\xi)\,.
\end{equation} 

In the model $\mathbb L_\nu^2(\R)$, 
the cohomological equation for the horocycle map $h_L$ is
\begin{equation}\label{eq:map_exp_coeqn}
 f(\xi) = \frac{ g(\xi)}{e^{-\sqrt{-1} \xi L} - 1}\,.  
\end{equation}  
We recall from Theorem~\ref{theo:invariant-maps} that 
the space of invariant distributions for the map that are not invariant for the flow 
is $\mathcal I_{L, \text{twist}}(M) \subset W^{-(1/2 + \epsilon)}(M)$ for any $\epsilon > 0$.  
Restricting to the the line or upper half-plane model of $\SL(2, \R)$, 
an explicit spanning set was first described in \cite{tanis2014cohomological}.  
In the Fourier model $\mathbb L_\nu^2(\R)$ and in \cite{flaminio2016effective}, 
this spanning set is given by 
\begin{equation}\label{eq:inv_maps}
\mathcal D_m(g) := g(\frac{2\pi m}{L})\,,  
\end{equation} 
where $m \in \Z$ if $H_\mu$ is in the principal or complementary series, 
and $m \in \N^+$ otherwise.  
Below we use that each distribution $\mathcal D_m$ is of the same form as the distribution \eqref{eq:inv_dist_twist} 
used to study the twisted equation \eqref{eq:twist-cohomological}.  

\begin{proof}[Proof of Theorem~\ref{main_thm1}] 
First let $\alpha, \beta \in \N$ and $g \in Ann_L^{\alpha+2\beta + 1+\epsilon}$, where $L > 0$.  
It will be convenient to estimate 
\begin{equation}\label{eq:coeqn:2}
e^{-\sqrt{-1} \xi L/2}  f(\xi)  = \frac{e^{-\sqrt{-1} \xi L/2}}{e^{-\sqrt{-1} \xi L} - 1}  g(\xi)\,.
\end{equation}
We first focus on $f$ restricted to the interval $I_L$ given by 
\[
I_L := [-\frac{\pi}{L}, \frac{\pi}{L}]\,.  
\]
Because $g$ is also a coboundary for the horocycle flow, from \eqref{eq:flow_cobound}
we can write
\[
\eqref{eq:coeqn:2} = \frac{1}{L}\left(e^{-\sqrt{-1} \xi L/2} \frac{-\sqrt{-1} \xi L}{e^{-\sqrt{-1} \xi L} - 1}\right)  G_\mu^{\mathcal V}( g)(\xi)\,.
\]
The function $\phi(\eta) = e^{-\sqrt{-1} \eta/2} \frac{-\sqrt{-1} \eta}{e^{-\sqrt{-1} \eta} - 1}$ is
smooth on $[-\pi,\pi]$, and $\xi L \in [-\pi, \pi]$ whenever $\xi \in I_L$.
Then by Lemma~\ref{lemm:XU-f:1}, for any $\alpha, \beta \in \N$,
we get coefficients $(b_{j k l m n}^{(\alpha \beta)})$ such that
\[
\begin{aligned}
\mathcal U^\beta & \mathcal X^\alpha e^{-\sqrt{-1} \xi L/2}  f  = \frac{1}{L} \mathcal U^\beta \mathcal X^{\alpha} (\phi(\xi L) G_\mu^{\mathcal V}( g)) \\
& = \frac{i^\beta}{L} \sum_{\substack{j + k + l +n \leq \alpha +  \beta \\ j +l +m \leq \beta}}  b_{j k l m n}^{(\alpha \beta)} \left((\mathcal U^j \mathcal X^{k}  G_\mu^{\mathcal V}( g)) ((\xi \frac{d^2}{d\xi^2})^l (\frac{d}{d\xi})^m \mathcal X^{n} \phi(\xi L))\right)\,.
\end{aligned}
\]
Recall that $\mathcal X = - 2\xi \frac{d}{d\xi}$,
so there is a constant $C_{\alpha, \beta} > 0$ such that
\[
\Vert ((\xi \frac{d^2}{d\xi^2})^l (\frac{d}{d\xi})^m \mathcal X^{n} \phi(\xi L)) \Vert_{L^\infty(I_L)}
 \leq C_{\alpha, \beta} (1 + L^{2l + m}) \,.
\]
So the triangle inequality gives
\begin{align}
\Vert \mathcal U^\beta & \mathcal X^\alpha e^{-\sqrt{-1} \xi L/2}  f \Vert_{\mathbb L_\nu^2(I_L)} \notag \\ & \leq \frac{C_{\alpha, \beta}}{L} \sum_{\substack{j + k + l +n \leq \alpha +  \beta \\ j +l+ m \leq \beta}} (1 + L^{2l+m}) \Vert \mathcal U^j \mathcal X^{k}  G_\mu^{\mathcal V}( g)\Vert_{\mathbb L_\nu^2(I_L)} \notag \\
& \leq C_{\alpha, \beta} \frac{1 + L^{\alpha + 2 \beta}}{L} \sum_{j + k \leq \alpha +  \beta} \Vert \mathcal U^j \mathcal X^{k}  G_\mu^{\mathcal V}( g)\Vert_{\mathbb L_\nu^2(I_L)}\,. \label{eq:flow_terms}
\end{align}

When $\xi \in \R \setminus I_L$, we only consider the case $\xi \in \R^+$, as the case $\xi \in \R^-$ is analogous.
Write
\[
\frac{e^{-\sqrt{-1} \xi L/2}}{e^{-\sqrt{-1} \xi L} - 1}  = \frac{\sqrt{-1}}{2\sin(\xi L/2)}\,,
\]
so
\[
e^{-\sqrt{-1} \xi L/2}  f(\xi)
 = \sqrt{-1} \frac{ g(\xi)}{2\sin(\xi L/2)} \,.
\]
Because
\[
\frac{1}{\sin(\xi L/2)} = \frac{1}{\pi}\left(\frac{2\pi}{\xi L} + \sum_{a \geq 1} (-1)^a \frac{\xi L /\pi}{(\xi L/2\pi)^2 - a^2}\right)\,,
\]
we have
\begin{align}
e^{-\sqrt{-1} \xi L/2}  f(\xi) & = \frac{\sqrt{-1}}{2\pi} \left(\frac{2\pi}{L} \frac{ g(\xi)}{\xi} + \sum_{a \geq 1} (-1)^a \frac{\xi L /\pi}{(\xi L/2\pi)^2 - a^2}  g(\xi)\right)  \notag \\
& = \frac{\sqrt{-1}}{L} \frac{ g(\xi)}{\xi} + \frac{2\sqrt{-1}}{L} \sum_{a \geq 1} (-1)^a \frac{\xi}{\xi^2 - (2\pi a/L)^2}  g(\xi)\,. \notag
\end{align}
By the triangle inequality,
\begin{align}
\Vert \mathcal U^\beta & \mathcal X^\alpha e^{-\sqrt{-1} \xi L/2}  f \Vert_{\mathbb L_\nu^2(\R^+\setminus I_L)} \notag \\
 & \leq \frac{1}{L} \Vert \mathcal U^\beta \mathcal X^\alpha (\frac{ g}{\xi}) \Vert_{\mathbb L_\nu^2(\R^+\setminus I_L)}  + \frac{2}{L}  \sum_{a \geq 1} \Vert \mathcal U^\beta \mathcal X^\alpha \frac{\xi}{\xi^2 - (2\pi a/L)^2}  g\Vert_{\mathbb L_\nu^2(\R^+\setminus I_L)}\,. \label{eq:all_terms2}
\end{align}

We estimate each term.  For the first one, notice
\[
\frac{ g(\xi)}{\xi} = -\sqrt{-1}  G^{\mathcal V}( g)(\xi)\,,
\]
so
\begin{equation}\label{eq:flow_term}
\Vert \mathcal U^\beta \mathcal X^\alpha (\frac{ g}{\xi}) \Vert_{\mathbb L_\nu^2(\R^+\setminus I_L)} \leq \Vert  G^{\mathcal V}( g) \Vert_{\alpha + \beta}\,.
\end{equation}

For the remaining terms, Lemma~\ref{lemm:XU-f:1} gives coefficients $(b_{j k l m n}^{(\alpha \beta)})$
\begin{align}
\mathcal U^\beta & \mathcal X^\alpha \frac{\xi}{\xi^2 - (2\pi a/L)^2}   g(\xi) = \mathcal U^\beta \mathcal X^\alpha \left(\frac{\xi^{1-\epsilon}}{\xi + 2\pi a/L}\right) \left(\frac{\xi^\epsilon  g(\xi)}{\xi - 2\pi a/L}\right) \notag \\
& = \sqrt{-1}^\beta \sum_{\substack{j + k + l + n \leq \alpha + \beta \\ j + l+m \leq \beta}} b_{j k l m n}^{(\alpha \beta)}
(\mathcal U^j \mathcal X^k (\frac{\xi^\epsilon  g(\xi)}{\xi - 2\pi a/L})) \phi_{l, m, n, a}^{(\epsilon)}(\xi)\,, \label{eq:non-flowTerms}
\end{align}
where
\[
\phi_{l, m, n}^{a, (\epsilon)}(\xi) := (\xi \frac{d^2}{d\xi^2})^l (\frac{d}{d\xi})^m \mathcal X^n \left(\frac{\xi^{1-\epsilon}}{\xi + 2\pi a/L}\right)\,.
\]

Notice that for any $\xi \geq 0$ and any $\epsilon > 0$,
\begin{equation}\label{eq:translate_xi}
\xi + \frac{2\pi a}{L} \geq \xi^{1 - \epsilon} (\frac{2\pi a}{L})^\epsilon\,,
\end{equation}
so
\begin{equation}\label{eq:phi_0}
\Vert \phi_{(0, 0, 0)}^{a, (\epsilon)} \Vert_{L^\infty} \leq (\frac{L}{2\pi a})^\epsilon\,.
\end{equation}

Now to estimate the uniform norm of $\phi_{l, m, n}^{(\epsilon)}$ when $(l, m, n) \neq (0, 0, 0)$\,, notice formula~\eqref{eq:decomp_constant_fact} gives constants $(c_{j, k}^{(l, m, n)})$ such that
\[
(\xi \frac{d^2}{d\xi^2})^l (\frac{d}{d\xi})^m \mathcal X^n = \sum_{\substack{j \leq l + n \\ k \leq 2l + m + n \\ k - j \geq l+m}} c_{j, k}^{(l, m, n)} \xi^j (\frac{d}{d\xi})^{k}\,.
\]
A short computation further shows that because $(l, m, n) \neq (0, 0, 0)$, the index $k$ in the above sum satisfies $k \geq 1$.
Now for any $k \in \N \setminus\{0\}$,
\[
\frac{d^k}{d\xi^k} \left(\frac{\xi}{\xi + 2\pi a/L}\right) = \frac{2\pi a}{L} \frac{(-1)^{k+1} k!}{(\xi + 2\pi a/L)^{k+1}}
\]
So
\[
\begin{aligned}
\phi_{l, m, n}^{a, (\epsilon)}(\xi) &= \sum_{\substack{j \leq l + n \\ 1\leq  k \leq 2l + m + n \\ k - j \geq l+m}} c_{j, k}^{(l, m, n)} \xi^j (\frac{d}{d\xi})^{k} \left(\frac{\xi^{1-\epsilon}}{\xi + 2\pi a/L}\right) \\
& =  \sum_{\substack{j \leq l + n \\ 1\leq k \leq 2l + m + n \\ k - j \geq l+m}} c_{j, k}^{(l, m, n)} \xi^j
\left(\frac{\xi^{1-\epsilon-k}}{\xi + 2\pi a/L} \prod_{0 \leq s \leq k-1} (-\epsilon - s) \right. \\
& \left. + \sum_{\substack{t_1 + t_2 = k \\ t_1 \geq 1}} c_{t_1, t_2} \frac{2\pi a}{L} \frac{(-1)^{t_1+1} t_1!}{(\xi + 2\pi a/L)^{t_1+1}} \xi^{-\epsilon - t_2} \prod_{0 \leq s \leq t_2-1} (-\epsilon - s) \right).
\end{aligned}
\]
By the above formula, and by formula~\eqref{eq:translate_xi}, we get
a constant $C_{\alpha, \beta} > 0$ such that
\begin{align}
\Vert \phi_{l, m, n}^{a, (\epsilon)} \Vert_{L^\infty(\R^+ \setminus I_L)} & \leq C_{\alpha, \beta} (1 + L^{l+m}) (\frac{L}{a})^\epsilon \notag \\
& \leq C_{\alpha, \beta} (1 + L^{\beta}) (\frac{L}{a})^\epsilon \,. \label{eq:phi_gen-uniform}
\end{align}

By formulas~\eqref{eq:flow_term}, \eqref{eq:non-flowTerms} and the above estimate, we get
\begin{align}
\sum_{a \geq 1} & \Vert \frac{\xi}{\xi^2 - (2\pi a/L)^2}  g\Vert_{\mathbb L_\nu^2(\R^+\setminus I_L)}  \notag \\
& \leq C_{\alpha, \beta} \sum_{\substack{a \geq 1 \\ j + k + l + n \leq \alpha + \beta \\ j + l+m \leq \beta}}
\Vert (\mathcal U^j \mathcal X^k (\frac{\xi^\epsilon  g}{\xi - 2\pi a/L})) \phi_{l, m, n, a}^{(\epsilon)}\Vert_{\mathbb L_\nu^2(\R^+\setminus I_L)} \notag \\
& \leq C_{\alpha, \beta}
L^{\epsilon}(1 + L^\beta) \sum_{\substack{a \geq 1 \\ j + k \leq \alpha + \beta \\ j \leq \beta}} a^{-\epsilon}
\Vert (\mathcal U^j \mathcal X^k (\frac{\xi^\epsilon  g}{\xi - 2\pi a/L})) \Vert_{\mathbb L_\nu^2(\R^+\setminus I_L)}\,. \label{eq:map-twist:5}
\end{align}

By \eqref{eq:inv_maps}, for any $a \in \N\setminus\{0\}$, we have that 
$g \in Ann_\mu^{2\pi a/L\,, \alpha + 2\beta + 1+\epsilon}$.  
Hence, we use the estimate \eqref{eq:twist_final:1} for the twisted equation, 
and get that for any $j, k \in \N$, 
\[
\begin{aligned}
\Vert (\mathcal U^j \mathcal X^k (\frac{\xi^\epsilon  g}{\xi - 2\pi a/L})) \Vert_{\mathbb L_\nu^2(\R^+\setminus I_L)} &
\leq C_{\alpha, \beta}\frac{L}{a} \sum_{\substack{s + t \leq k + 2j + 1\\ s \leq j}} \Vert \mathcal U^s \mathcal X^t \vert \mathcal V \vert^{\epsilon}  g \Vert_{\mathbb L_\nu^2(\R)} \\
& \leq C_{\alpha, \beta} \frac{L}{a} \Vert g \Vert_{k + 2j + 1+\epsilon} \,.
\end{aligned}
\]

Combining the above estimate with \eqref{eq:map-twist:5}, we get
\[
\begin{aligned}
\eqref{eq:map-twist:5} & \leq \frac{C_{\alpha, \beta}}{L} \left(\Vert G^{V} g \Vert_{\alpha + \beta}
+  L^{1+\epsilon}(1 + L^\beta) \sum_{a \geq 1} a^{-(1+\epsilon)} \Vert g \Vert_{\alpha + 2\beta + 1 + \epsilon} \right) \\
& \leq C_{\alpha, \beta} \left(\frac{1}{L} \Vert G^{V} g \Vert_{\alpha + \beta}  + L^{\epsilon} \frac{(1 + L^\beta)}{\epsilon} \Vert g \Vert_{\alpha + 2\beta + 1 + \epsilon }\right)\,.
\end{aligned}
\]

Then the triangle inequality, the above estimate and \eqref{eq:flow_terms} imply
there is a constant $C_{\alpha, \beta} > 0$ such that
\begin{align}
\Vert \mathcal U^\beta & \mathcal X^\alpha e^{-\sqrt{-1} \xi L/2} f \Vert_{\mathbb L^2_\nu(\R^+)} \notag \\
 & \leq C_{\alpha, \beta} \left(\Vert \mathcal U^\beta \mathcal X^\alpha e^{-\sqrt{-1} \xi L/2} f \Vert_{\mathbb L_\nu^2(I_L)} + \Vert \mathcal U^\beta \mathcal X^\alpha e^{-\sqrt{-1} \xi L/2} f \Vert_{\mathbb L_\nu^2(\R\setminus I_L)}\right) \notag \\
& \leq C_{\alpha, \beta} \left(\frac{1 + L^{\alpha + 2\beta}}{L}\Vert G_\mu^{V}(g)\Vert_{\alpha + \beta} + L^{\epsilon} \frac{(1 + L^\beta)}{\epsilon} \Vert g \Vert_{\alpha + 2\beta + 1 + \epsilon }\right)\,. \label{eq:UX_final:0}
\end{align}

We simplified notation by writing $f = \mathcal A f$.
From the definition of the unitary map $\mathcal A : H_\mu \to \mathbb L^2_{\nu}(\R)$ given in formula~\ref{eq:def_A}, we have that
\[
\mathcal A^{-1} f(\xi) = \mathcal F^{-1} (\xi^{(\nu - 1)/2} f)\,,
\]
where $\mathcal F$ is the Fourier transform.  Then
\[
\begin{aligned}
\Vert \mathcal U^\beta \mathcal X^\alpha e^{-\sqrt{-1} \xi L/2} \mathcal A f \Vert_{\mathbb L^2_\nu(\R)} & = \Vert \mathcal A U^\beta X^\alpha \mathcal A^{-1} (\mathcal A f(\xi) e^{-\sqrt{-1} \xi L/2})  \Vert_{\mathbb L^2_\nu(\R)} \\
& = \Vert \mathcal A U^\beta X^\alpha \mathcal A^{-1} (\hat f(\xi) e^{-\sqrt{-1} \xi L/2} \xi^{-(\nu-1)/2}) \Vert_{\mathbb L^2_\nu(\R)} \\
& = \Vert \mathcal A U^\beta X^\alpha \mathcal F^{-1} (\hat f(\xi) e^{-\sqrt{-1} \xi L/2})  \Vert_{\mathbb L^2_\nu(\R)} \\
& = \Vert \mathcal A U^\beta X^\alpha f(t - \frac{L}{2}) \Vert_{\mathbb L^2_\nu(\R)} \\
& =  \Vert U^\beta X^\alpha f \circ h_{-L/2} \Vert\,.
\end{aligned}
\]
\label{eq:UX_final}

Because the analogous estimate to \eqref{eq:UX_final:0} also holds on $\R^-$,
we conclude using the above equality that
\begin{align}
\Vert U^\beta X^\alpha & f \circ h_{-L/2} \Vert \notag \\
& \leq C_{\alpha, \beta} \left(\frac{1 + L^{\alpha + 2\beta}}{L}\Vert G_\mu^{V}(g)\Vert_{\alpha + \beta} + L^{\epsilon} \frac{(1 + L^\beta)}{\epsilon} \Vert g \Vert_{\alpha + 2\beta + 1 + \epsilon }\right)\,. \notag 
\end{align}

Notice that $V$ commutes with the translation operator $h_L$,  so $f \circ h_L - f = g$ implies $(V^\gamma f) \circ h_L - V^\gamma f = V^\gamma g$, for any $\gamma \in \N$.
Then by the commutation relations \eqref{eq:sl-com} and the above estimate,
for any $s \in \N$, there is a constant $C_s > 0$ such that
\begin{align}
\Vert f \circ h_{-L/2} \Vert_s & \leq \sum_{\alpha + \beta + \gamma \leq s} \Vert U^\beta X^\alpha V^\gamma  f \circ h_{L-2} \Vert \notag \\
& \leq C_{s} \sum_{\alpha + \beta + \gamma \leq s} \frac{1 + L^{\alpha + 2\beta}}{L} \Vert V^\gamma G_\mu^{V}(g)\Vert_{\alpha + \beta} \notag \\
& + C_s L^{\epsilon} \frac{(1 + L^\beta)}{\epsilon} \Vert V^\gamma g \Vert_{\alpha + 2\beta + 1 + \epsilon } \label{eq:final_final_map_est} \\
& \leq C_{s} \left(\frac{1 + L^{2s}}{L} \Vert G_\mu^{V}(g)\Vert_{s} + L^{\epsilon} \frac{(1 + L^s)}{\epsilon} \Vert g \Vert_{2s + 1 + \epsilon}\right)\,. \notag
\end{align}
Theorem~\ref{main_thm1} now follows for $s \geq 0$ by interpolation\,.
\end{proof}

\begin{proof}[Proof of Theorem~\ref{eq:tame_X_V}]
This is immediate from \eqref{eq:final_final_map_est}.
\end{proof}

\subsection{Proof of Theorem~\ref{main_thm3}}
We use the Fourier transform of the line model of the principal series.
Recall the formulas for vector fields $\hat U, \hat V$ and $\hat X$ are given in formula~\eqref{eq:Fourier_vf}.
We first consider Sobolev estimates for the solution to the equation \eqref{eq:twist-cohomological}, so
\begin{equation}\label{eq:coeqn-twist-gen_lambda}
  \hat f(\xi) = -\sqrt{-1} \frac{\hat g(\xi)}{(\xi - \lambda)}\,,
\end{equation}
which reduces to studying
\begin{equation}\label{eq:twist_eqn-lambda=1}
  \hat f(\xi) = -\sqrt{-1} \frac{\hat g(\xi)}{\lambda(\xi - 1)}\,,
\end{equation}
as in \eqref{eq:twist_est}.

Let $\vert \nu \vert \geq 4$, and let
\[
I_\nu := [1, 1+ \frac{1}{\vert \nu \vert}]\,,
\]
and let $\hat q \in C_c^\infty([\frac{3}{4}, \frac{4}{3}])$
be identically one on the interval $I_\nu$.
Let $\hat g \in C_c^\infty([\frac{3}{4}, \frac{4}{3}])$
be defined for any $\xi \in \R$ by
\begin{equation}\label{eq:fdef}
\hat g(\xi) := \hat q(\xi) (\xi^{\nu+1} - 1)\,.
\end{equation}
Notice that $g \in H_\mu^\infty$ and $\hat g(1) = 0$.
Then by Theorem~3.4 of \cite{flaminio2016effective}, the equation \eqref{eq:twist-cohomological} has a solution $f \in H_\mu^\infty$.

The following property will be used several times: for all $m \in \Z$,
\begin{equation}\label{eq:Vxi^nu=0}
 \hat U \xi^{\nu+m} = -\sqrt{-1} \ m(\nu+m) \xi^{\nu+m-1}\,.
\end{equation}

\begin{lemma}\label{f-norm-est}
Let $s \geq 0$.  Then there is a constant $C_s^{(0)} > 0$ such that
\[
\Vert g \Vert_s \leq C_s^{(0)} (1+|\nu|)^s\,.
\]
\end{lemma}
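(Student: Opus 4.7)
The plan is to work in the line model of the principal series. Since $\nu$ is purely imaginary here, we have $\re\nu = 0$, so the norm in \eqref{eq:norm_f} reduces to the ordinary Lebesgue $L^2$ norm of $\hat g$. Combined with the commutation relations \eqref{eq:sl-com} and the standard elliptic regularity of the Casimir operator, for any integer $s$ there is a constant $C_s>0$ such that
\[
\Vert g\Vert_s^2 \;\leq\; C_s \sum_{\alpha+\beta+\gamma \leq s} \int_\R \bigl|\hat U^{\alpha}\hat X^{\beta}\hat V^{\gamma}\hat g(\xi)\bigr|^2\,d\xi\,.
\]
Non-integer $s$ will follow by interpolation. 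So it suffices to bound each term on the right by $C_{\alpha\beta\gamma}(1+|\nu|)^{2s}$.

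Next I would compute how the Fourier vector fields \eqref{eq:Fourier_vf} act on monomials $\xi^{\nu+m}$. A direct calculation, generalizing \eqref{eq:Vxi^nu=0}, gives
\[
\hat V\,\xi^{\nu+m} = -\sqrt{-1}\,\xi^{\nu+m+1},\qquad
\hat X\,\xi^{\nu+m} = (-\nu-1-2m)\,\xi^{\nu+m},\qquad
\hat U\,\xi^{\nu+m} = -\sqrt{-1}\,m(\nu+m)\,\xi^{\nu+m-1}.
\]
Thus each application of $\hat X$ or $\hat U$ contributes a single polynomial factor in $\nu$ of degree at most one, while $\hat V$ contributes no $\nu$-factor. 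Similarly, applied to smooth compactly supported factors such as $\hat q(\xi)$ or constants, these operators contribute at most one factor of $(\nu-1)$ per application and otherwise behave as bounded differential operators on the compact support $[\tfrac{3}{4},\tfrac{4}{3}]$.

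The third step is to iterate Leibniz in the expression $\hat U^{\alpha}\hat X^{\beta}\hat V^{\gamma}[\hat q(\xi)(\xi^{\nu+1}-1)]$. By induction on $\alpha+\beta$, every resulting term is a product of: a polynomial in $\nu$ of degree at most $\alpha+\beta$; a derivative of $\hat q$ (hence bounded); and either a monomial $\xi^{\nu+k}$ coming from the $\xi^{\nu+1}$ piece or a plain monomial $\xi^k$ coming from the constant $-1$. Since $\nu\in\sqrt{-1}\,\R$, we have $|\xi^{\nu+k}| = \xi^k$ for $\xi>0$, which is uniformly bounded on $[\tfrac{3}{4},\tfrac{4}{3}]$. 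This yields the pointwise bound
\[
\bigl|\hat U^{\alpha}\hat X^{\beta}\hat V^{\gamma}\hat g(\xi)\bigr| \;\leq\; C_{\alpha\beta\gamma}\,(1+|\nu|)^{\alpha+\beta}\,\mathbf 1_{[3/4,4/3]}(\xi),
\]
and integrating over this compact interval finishes the case of integer $s$, with interpolation handling general $s\geq 0$.

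The main bookkeeping difficulty is verifying that the total $\nu$-degree of the expansion never exceeds $\alpha+\beta$, despite the fact that each application of $\hat X$ or $\hat U$ both multiplies by a polynomial in $\nu$ \emph{and} differentiates, potentially creating many new cross-terms. This is exactly where the inductive step uses that the differentiating part $-2\xi\frac{d}{d\xi}$ of $\hat X$ and $-\xi\frac{d^2}{d\xi^2}$ of $\hat U$ produces terms free of any new $\nu$-factor, while the multiplicative part contributes at most one more $\nu$; no other $\nu$-dependence can enter, and the constant piece $-1$ in $(\xi^{\nu+1}-1)$ is annihilated after at most one application of $\hat X$ or $\hat U$, so it contributes only to lower-order polynomial factors.
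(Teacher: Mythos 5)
Your proof is correct and follows essentially the same path as the paper: reduce the Sobolev norm to $L^2$ norms of $\hat U^\alpha\hat X^\beta\hat V^\gamma\hat g$, expand via Leibniz-type formulas, use the explicit action of the vector fields on $\xi^{\nu+m}$ to see that each $\hat X$ or $\hat U$ contributes at most one factor of $\nu$, then integrate over the compact support and interpolate. The paper invokes the precise Leibniz formulas \eqref{eq:Leibniz} from \cite{flaminio2016effective} to make the degree count routine; your heuristic in the final paragraph is slightly imprecise, since the differential parts of $\hat X$ and $\hat U$ do produce $\nu$-factors when acting on $\xi^{\nu+m}$ --- for $\hat U$ it is a cancellation, already visible in your formula $\hat U\xi^{\nu+m}=-\sqrt{-1}\,m(\nu+m)\xi^{\nu+m-1}$, that keeps the degree at one.
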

\begin{proof}
First let $s \in 2 \N$.
By the commutation relations \eqref{eq:sl-com},
and by the triangle inequality,
we have a constant $C_s > 0$ such that
\begin{align}
\Vert (I - V^2 - X^2 - U^2)^{s/2} g\Vert_{0} & \leq C_s \sum_{k+ m+ n\leq s}
\Vert V^k X^m U^{n} g \Vert_{0} \notag \\
& = C_s \sum_{k+ m+ n\leq s}
\Vert \hat V^k \hat X^m \hat U^{n} \hat g \Vert_{0}\label{eq:sumXUV-1} \,.
\end{align}

From formula~(39) and Lemma~3.7 of \cite{flaminio2016effective}, we have
Leibniz-type formulas for the operators $\hat X$ and $\hat U$.
Specifically, there are universal coefficients $(a^{(\alpha)}_\ell)$ and
 $(b^{(\beta)}_{ijkm})$ such that for any pair of functions
 $g_1, g_2$, we have
 \begin{equation}\label{eq:Leibniz}
 \begin{aligned}
   & \hat X^\alpha (g_1 g_2)  =  \sum_{\ell=0}^{\alpha} a^{(\alpha)}_\ell \hat X^\ell g_1  (\hat X -(1 - \nu))^{\alpha-\ell} g_2) \\
 &\hat U^\beta(g_1 g_2)  = \sum_{
\substack{
 i+j+ m\leq \beta \\
 k \leq m
  }}
 b^{(\beta)}_{ijkm} [(\frac{d}{d\xi})^m \hat U^i g_1]
 [(\hat X- (1-\nu))^k \hat U^j g_2]\,.
 \end{aligned}
 \end{equation}
Set
\[
g_1(\xi) := \hat q(\xi)\,; \qquad g_2(\xi) := \xi^{\nu+1} - 1\,.
\]
By \eqref{eq:Vxi^nu=0}, and because $\hat X$ is only a first order differential operator, it follows that there is a constant $C_{s, q} > 0$ such that
\[
\Vert \hat X^m \hat U^{n} \hat g \Vert_{0} \leq C_{s, q} (1 + \vert \nu \vert)^{m+n}\,.
\]
Because $\hat g$ is supported on a bounded interval, it follows from the formula for $\hat U$ and the above estimate that
\[
\eqref{eq:sumXUV-1} \leq C_{s, q} (1 + \vert \nu \vert)^s\,.
\]
Because $q$ is fixed, we have now proven the lemma in the
case that $s \in 2 \N$.
The lemma for $s \geq 0$ follows by interpolation.
\end{proof}

Now we focus on a lower bound for the Sobolev norm of
the solution $f$, given by \eqref{eq:twist_eqn-lambda=1}.
Observe that for any $s \geq 0$,
\[
\Vert f \Vert_s \geq \Vert (I - U^2)^{s/2} f \Vert\,.
\]
\begin{proposition}\label{prop:lower_twist}
For every $s \geq 0$,
there are constants $c_s^{(1)} > 4$\,, $C_s^{(1)} > 0$ such that
for all $\vert \nu \vert > c_s^{(1)}$,
\[
\Vert (I - U^2)^{s/2} f \Vert \geq \frac{C_s^{(1)}}{\vert \lambda \vert} |\nu|^{2s+1/2}\,.
\]
\end{proposition}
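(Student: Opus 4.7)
The plan is to lower-bound the Sobolev norm by restricting $\hat U^s \hat f$ to the interval $I_\nu$ and exploiting the natural rescaling $u = |\nu|(\xi - 1)$, under which $\hat U$ acquires a uniform factor of $|\nu|^2$ while $\hat f$ already carries a factor of $|\nu|$. First I would reduce to bounding $\norm{U^s f}$ by noting that $-U^2 \geq 0$ (since $U$ is skew-adjoint in the unitary representation), so $\norm{(I - U^2)^{s/2} f}^2 = \sum_{k=0}^s \binom{s}{k} \norm{U^k f}^2 \geq \norm{U^s f}^2$. Because $\re\nu = 0$ on the principal series, formula \eqref{eq:norm_f} further reduces this to the ordinary $L^2(d\xi)$-norm of $\hat U^s \hat f$, and on $I_\nu$ we have $\hat q \equiv 1$ so that $\hat f(\xi) = -\sqrt{-1}(\xi^{\nu+1} - 1)/(\lambda(\xi - 1))$.

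Next I would establish the clean factorization $\hat U^s = (\sqrt{-1})^s (A-1)(A-2)\cdots(A-s)\,\partial_\xi^s$, where $A := \nu - \xi \partial_\xi$. Writing $\hat U = \sqrt{-1}\,\partial_\xi A$ directly from \eqref{eq:Fourier_vf}, the commutator $[\partial_\xi, A] = -\partial_\xi$ gives $\partial_\xi A = (A-1)\partial_\xi$, and iterating produces the factorization. Combined with the integral representation $(\xi^{\nu+1} - 1)/(\xi - 1) = (\nu+1)\int_0^1 (1 + t(\xi - 1))^\nu dt$, this yields the explicit formula
\[ \partial_\xi^s \hat f(\xi) = -\frac{\sqrt{-1}(\nu+1)\nu \cdots (\nu - s + 1)}{\lambda} \int_0^1 t^s (1 + t(\xi - 1))^{\nu - s}\, dt \]
on $I_\nu$, so the task reduces to computing $(A-1)\cdots(A-s)$ applied to this integral.

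Changing variables to $u = |\nu|(\xi - 1) \in [0, 1]$ and writing $\nu = \epsilon_\nu|\nu|$ with $\epsilon_\nu \in \{\pm\sqrt{-1}\}$, each factor $A - k$ becomes $|\nu|(\epsilon_\nu - \partial_u) - k - u\partial_u$. A direct computation using the identity $A w^\mu = (\nu - \mu)w^\mu + \mu(1-t)w^{\mu - 1}$ with $w = 1 + tu/|\nu|$ shows that the dominant contribution of each $(A-k)$ lowers the exponent by one and multiplies by $\epsilon_\nu|\nu|(1-t)$. Combined with the asymptotic $(\nu+1)\nu \cdots (\nu - s + 1) = (\epsilon_\nu|\nu|)^{s+1}(1 + O(1/|\nu|))$ and the standard limit $(1 + tu/|\nu|)^{\nu - 2s} = e^{\epsilon_\nu tu}(1 + O(1/|\nu|))$, I would derive
\[ (\hat U^s \hat f)(1 + u/|\nu|) = C_s\,\frac{|\nu|^{2s+1}}{\lambda}\int_0^1 t^s(1-t)^s e^{\epsilon_\nu tu}\, dt + O\!\left(\frac{|\nu|^{2s}}{|\lambda|}\right), \]
uniformly in $u \in [0,1]$, for a nonzero constant $C_s$ depending only on $s$.

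To finish, the leading integral is continuous in $u$ and at $u = 0$ equals the Beta function $B(s+1, s+1) = (s!)^2/(2s+1)! > 0$, so by continuity there exist $\delta_s, c_s > 0$ independent of $|\nu|$ such that its modulus is $\geq c_s$ on $[0, \delta_s]$. Integrating over this interval (with $d\xi = du/|\nu|$) yields $\norm{U^s f}^2 \geq c\,|\nu|^{4s+1}/|\lambda|^2$, and taking $c_s^{(1)}$ large enough absorbs the $O(|\nu|^{2s}/|\lambda|)$ error into half the main term, giving the claimed bound. The main obstacle is justifying that the subleading pieces ($-k$ and $-u\partial_u$) of each $(A-k)$, accumulated over $s$ applications, together with the error in approximating $(1 + tu/|\nu|)^{\nu - 2s}$ by $e^{\epsilon_\nu tu}$, remain uniformly $O(|\nu|^{2s}/|\lambda|)$ in the scaled variable; the factorization in the second step is what makes this tractable, because it replaces analysis of $\hat U^s$ by repeated action of the commuting first-order operators $A - k$ on an explicit integrand, and the Beta-integral representation of the leading profile makes its non-vanishing transparent.
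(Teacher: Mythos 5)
Your proof is correct in outline, and the overall strategy — restrict to $I_\nu$, split $\hat U^s\hat f$ into a dominant piece and a controllable remainder, establish a pointwise lower bound on an interval of width $\sim 1/|\nu|$, and integrate to pick up the extra factor $|\nu|^{-1/2}$ — matches the paper's. But you realize the algebra by a genuinely different route, and the comparison is worth recording. The paper invokes the identity (quoted from Flaminio--Forni--Tanis) $\hat U^\beta\hat f(\xi)=\frac{(-\sqrt{-1})^{\beta+1}}{\lambda}\int_0^1 t^\beta[(\hat U+(t-1)\partial_\xi^2)^\beta\hat g'](\xi_t)\,dt$, isolates the term $(t-1)^\beta(\partial_\xi^2)^\beta$, and bounds the resulting oscillatory integral from below using an explicit positivity argument: for $\xi\in I_\nu$ one has $\cos(|\nu|\log\xi_t)>1/4$ because $|\nu\log\xi_t|$ is uniformly close to $1$, so the integrand is pointwise positive. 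Your version instead uses the clean factorization $\hat U^s=(\sqrt{-1})^s(A-1)\cdots(A-s)\,\partial_\xi^s$ with $A=\nu-\xi\partial_\xi$, applies $\partial_\xi^s$ first to the integral representation $\hat f=-\frac{\sqrt{-1}(\nu+1)}{\lambda}\int_0^1(1+t(\xi-1))^\nu dt$, rescales $u=|\nu|(\xi-1)$, and takes the $|\nu|\to\infty$ limit to identify a Beta-type profile $\int_0^1 t^s(1-t)^s e^{\epsilon_\nu tu}\,dt$, whose non-vanishing at $u=0$ is transparent. This avoids quoting the external $\hat U^\beta$-commutation formula and replaces the trigonometric positivity estimate with a scaling-limit identification, which is arguably cleaner conceptually; what it costs is that you still owe the uniform $O(|\nu|^{2s}/|\lambda|)$ error control when peeling off the subleading pieces of each $(A-k)$ and when approximating $(1+tu/|\nu|)^{\nu-2s}$ by $e^{\epsilon_\nu tu}$ — you flag this as the main obstacle but do not carry it out, whereas the paper's Lemma~\ref{lemm:upper0} handles the corresponding remainder explicitly. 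Two minor omissions to fill in: (i) the reduction $\Vert(I-U^2)^{s/2}f\Vert\geq\Vert U^s f\Vert$ via the binomial expansion is only literal for integer $s$, and the general $s\geq 0$ case requires the interpolation step the paper performs at the end of its proof; (ii) your continuity argument near $u=0$ should be stated so that the constant $\delta_s$ is chosen before $|\nu|$ and the $O(1/|\nu|)$ error is then absorbed, which is how the paper ultimately chooses $c_s^{(1)}$.
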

\begin{proof}
Clearly,
\begin{align}
\Vert (I - U^2)^{s/2} f \Vert & = \Vert (I - \hat U^2)^{s/2} \hat f \Vert_{L^2(\R)}  \notag \\ 
& \geq \Vert (I - \hat U^2)^{s/2} \hat f \Vert_{L^2(I_\nu)}\,. \label{eq:lower_bnd:0}
\end{align} 
We prove that there is a constant $C_s > 0$ such that
\[
\eqref{eq:lower_bnd:0} \geq C_s |\nu|^{2s+1/2}\,,
\]

We begin by considering integer powers of $\hat V$.
Let $\beta \in \N\setminus\{0\}$.
Because $\hat g(1)= 0$, the fundamental theorem of calculus
shows that for all $\xi \in \R$,
\[
\hat f(\xi) = -\frac{\sqrt{-1}}{\lambda} \int_0^1 \hat g'(1 + t( \xi - 1)) dt
\]
(see Lemma~3.5 of \cite{flaminio2016effective}).
Then for any $t \in [0, 1]$,
set
\[
\xi_t:= 1 + t (\xi-1)\,.
\]
Formula (51) of \cite{flaminio2016effective} gives, by a short computation,
that
  \begin{equation}
    \label{eq:VXder}
\hat U^\beta \hat f(\xi) = \frac{(-\sqrt{-1})^{\beta+1}}{\lambda}\int_0^1 t^\beta [ (\hat U +(t-1)  \frac{d^2}{d\xi^2})^\beta \hat g'] (\xi_t) dt \,.
  \end{equation}

Next, we now expand and rewrite the expression $[ \hat U +(t-1)  \frac{d^2}{d\xi^2}]^\beta$.
Let $W_0:= \frac{\partial^2}{\partial \xi^2}$ and $W_1 := \hat U$.
For each $0 \leq m \leq \beta - 1$ and $1 \leq n \leq \beta$,
let $\mathcal S_{m, n}$ be the set of all sequences of length $m+n$
consisting of $m$ $0$'s and $n$ $1$'s.
Then
\[
\begin{aligned}
(\hat U+(t-1)\frac{d^2}{d\xi^2})^\beta = (t-1)^{\beta} (\frac{\partial^2}{\partial \xi^2})^{\beta} +
\sum_{
\substack{
m+n\leq\beta\\
m \leq \beta-1}} (t-1)^m A_{m,n}\,,
\end{aligned}
\]
where,
\[
A_{m, n} := \sum_{(l_i) \in \mathcal S_{m, n}} \prod_{i = j}^{\beta} W_{l_j}\,.
\]

Hence,
\[
\begin{aligned}
\eqref{eq:VXder} & = \frac{(-\sqrt{-1})^{\beta+1}}{\lambda}\int_0^1 t^\beta (t-1)^\beta [(\frac{d^2}{d\xi^2})^\beta \hat g'](\xi_t) dt  \\
& + \frac{(-\sqrt{-1})^{\beta+1}}{\lambda} \sum_{
\substack{
m+n\leq\beta\\
m \leq \beta-1}} \int_0^1 t^\beta (t-1)^m A_{m,n} \hat g(\xi_t) dt \,.
\end{aligned}
\]
Then in the above expression, we show that the first term is relatively large,
and the sum is relatively small.  Proposition~\ref{prop:lower_twist} will follow
by the triangle inequality.

\begin{lemma}\label{lemm:upper0}
For any $\beta \in \N$, there is a constant $C_\beta^{(2)} > 0$ such that
for any $\xi \in I_\nu$, and for any $\vert \nu \vert \geq 4$,
\begin{equation}\label{eq:expand-V-d/dxi-1}
\vert \int_0^1 [t^\beta \sum_{
\substack{
m+n\leq\beta\\
m \leq \beta-1}}
(t-1)^m A_{m,n} \hat g'](\xi_t) dt\vert \leq C_\beta^{(2)} \vert \nu\vert^{2\beta}\,.
\end{equation}
\end{lemma}
\begin{proof}
Notice that for any $k \in \N\setminus\{0\}$,
\[
\frac{d^k}{d\xi^k} \xi^{\nu+ 1} = \prod_{j=0}^{k-1} (\nu+1-j) \xi^{\nu-k+1}\,.
\]
So by \eqref{eq:Vxi^nu=0} and by the conditions on the sum \eqref{eq:expand-V-d/dxi-1}
there is a constant $C_\beta > 0$ such that
for any $\xi \in I_\nu$,
\[
\begin{aligned}
\vert A_{m,n} \hat g'(\xi)\vert\vert & \leq C_\beta \vert \nu \vert^{2m+n+1} \\
& \leq C_\beta \vert \nu \vert^{2\beta}\,.
\end{aligned}
\]
Hence, there is a constant $C_\beta > 0$ such that
for all $\xi \in I_\nu$ and $t \in [0, 1]$,
\[
\begin{aligned}
\vert \sum_{
\substack{
m+n\leq\beta\\
m \leq \beta-1}} (t-1)^m A_{m,n} \hat g'(\xi)\vert \leq C_\beta  \vert \nu\vert^{2\beta} \,.
\end{aligned}
\]
\end{proof}

It remains to prove the following lower bound.

\begin{lemma}\label{prop:int-omega}
For all $\beta \in \N \setminus\{0\}$, there is a constant $C_\beta^{(3)} > 4$ such that for all $\vert \nu \vert \geq C_\beta^{(3)}$ and for all $\xi \in I_\nu$, we have
\[
\vert \int_0^1 t^\beta (t-1)^{\beta} [(\frac{\partial^2}{\partial \xi^2})^{\beta} \hat g'](\xi_t) dt \vert  \geq \frac{1}{C_\beta^{(3)}} \vert \nu \vert^{2\beta+1}\,.
\]
\end{lemma}
\begin{proof}
We have
\[
(\frac{\partial^2}{\partial \xi^2})^{\beta} \hat g'(\xi) = \xi^{\nu - 2\beta} \prod_{l = 0}^{2\beta} (\nu+1-l)\,,
\]
Notice that $\nu \in \sqrt{-1} \R$ and $\vert \nu \vert \geq 4$.
Then by assumption on $\xi$, we have
\[
\begin{aligned}
\vert (\frac{\partial^2}{\partial \xi^2})^{\beta} \hat g'(\xi)\vert & = \vert (\nu+1) (\frac{\partial^2}{\partial \xi^2})^{\beta} \xi^{\nu} \\
& = \vert \xi^{-2\beta} \prod_{l = 0}^{2\beta} (\nu+1-l)\vert \\
& \geq \vert \nu \vert^{2\beta+1} \vert \xi \vert^{-2\beta}\,.
\end{aligned}
\]
So
\begin{equation}\label{eq:lower(3)}
\vert \int_0^1 t^\beta (t-1)^{\beta} (\frac{\partial^2}{\partial \xi^2})^{\beta} \hat g'(\xi_t) dt \vert \geq \vert \nu \vert^{2\beta+1} \vert \int_0^1 t^\beta (t-1)^{\beta} \xi_t^{\nu - 2\beta} dt \vert \,.
\end{equation}

Next, we can write
\[
\begin{aligned}
\xi_t^\nu & = \exp(\sqrt{-1} \text{sgn}(\nu) \vert \nu\vert \log (1+t(\xi-1))) \\
& = \cos(\text{sgn}(\nu) \vert\nu\vert \log (1+t(\xi-1))) + \sqrt{-1} \sin(\text{sgn}(\nu) \vert\nu\vert \log (1+t(\xi-1)))\,.
\end{aligned}
\]
So
\[
\begin{aligned}
\eqref{eq:lower(3)} & \geq \vert \nu \vert^{2\beta+1} \vert  \int_0^1 t^\beta (t-1)^{\beta} \xi_t^{\nu-2\beta} dt \vert \notag \\
&  \geq \vert \nu \vert^{2\beta+1} \vert \re \int_0^1 t^\beta (t-1)^{\beta} \xi_t^{\nu-2\beta} dt \vert \notag  \\
& \geq \vert \nu \vert^{2\beta+1} \vert \int_0^1 t^\beta (t-1)^{\beta} \xi_t^{-2\beta}\cos( \vert \nu\vert \log (1+t(\xi-1))) dt \vert \notag \\
& = \vert \nu \vert^{2\beta+1} \vert \int_0^1 t^\beta (1-t)^{\beta} \xi_t^{-2\beta} \cos( \vert \nu\vert (t (\xi-1) + \phi(t, \xi)))dt \vert \,, \label{eq:lower(1)}
\end{aligned}
\]
where for any $t \in (0, 1)$,
$\phi$ is given by 
\[
\phi(t, \xi) := \log(1+t(\xi-1)) - t(\xi-1)\,.  
\]
Then for $\vert \nu \vert \geq 4$,
\[
\begin{aligned}
\vert \phi(t, \xi) \vert & \leq \frac{1}{2} (t (\xi - 1))^2 (1 + \frac{2}{3} t(\xi-1)) \\
& \leq \frac{1}{\vert \nu \vert^2}\,.
\end{aligned}
\]
Then
\begin{align}
\cos( \vert \nu\vert \log (1+t(\xi -1) &)= \cos\left(\vert\nu\vert (t (\xi-1) + \phi(t, \xi))\right)  \notag \\
& \geq \cos(1+ \frac{1}{\vert \nu \vert}) > \frac{1}{4}\,.\label{eq:cos>0}
\end{align}
So the integrand in \eqref{eq:lower(1)} is positive
for any $t \in (0, 1)$, and in particular,
\begin{align}
\eqref{eq:lower(1)} & = \vert \nu \vert^{2\beta+1} \int_0^1 t^\beta (1-t)^{\beta} \xi_t^{-2\beta} \cos( \vert\nu\vert (t (\xi-1) + \phi(t, \xi)))dt \notag \\
& > \frac{\vert \nu \vert^{2\beta+1}}{4} \int_0^1 t^\beta (1-t)^{\beta} \xi_t^{-2\beta} dt\,. \label{eq:lower_2}
\end{align}

Next, because $\xi \in I_\nu$ and $\vert \nu \vert \geq 4$, we have
\[
 \xi_t^{-2\beta} \geq (1 + \frac{1}{\vert \nu \vert})^{-2\beta} > 4^{-\beta} \,,
\]
which means
\begin{equation}\label{eq:lower:final-1}
\eqref{eq:lower_2} \geq 4^{-(\beta+1)} \vert \nu \vert^{2\beta+1} \int_0^1 t^\beta (1-t)^{\beta} dt\,.
\end{equation}
Finally, restricting to the interval $t \in [\frac{1}{4}, \frac{3}{4}]$, we get
\[
\eqref{eq:lower:final-1} > 4^{-(3\beta +2)} \vert \nu \vert^{2\beta+1}  \,.
\]
\end{proof}

We now prove Proposition~\ref{prop:lower_twist}.
Because the parameter $\nu$ can be arbitrarily large
in absolute value, let $\nu$ satisfy
\[
\left\{
\begin{aligned}
& \vert \nu \vert \geq C_\beta^{(3)}\,,  \\
& \frac{1}{C_\beta^{(3)}}- \frac{C_\beta^{(2)}}{\vert \nu \vert} \geq \frac{1}{2 C_\beta^{(3)}} \,.
\end{aligned}
\right.
\]
Then by formula~\ref{eq:expand-V-d/dxi-1}, by the triangle inequality and by Lemmas~\ref{lemm:upper0} and \ref{prop:int-omega}, we get
\[
\begin{aligned}
\Vert \hat U^\beta \hat f \Vert_{L^2(I_\nu)} & \geq \frac{1}{\vert\lambda\vert} \left|\Vert \int_0^1 t^\beta (t-1)^{\beta} [(\frac{\partial^2}{\partial \xi^2})^{\beta} \hat g'](\xi_t) dt \Vert_{L^2(I_\nu)}\right. \\
&-  \left.\Vert \int_0^1 [t^\beta \sum_{
\substack{
m+n\leq\beta\\
n \geq 1}}
(t-1)^m A_{m,n} \hat g'](\xi_t) dt\Vert_{L^2(I_\nu)}\right|  \\
& \geq \frac{\vert \nu \vert^{2\beta+1/2}}{\vert\lambda\vert} \left(\frac{1}{C_\beta^{(3)}}- \frac{C_\beta^{(1)}}{\vert \nu \vert}\right) \\
& \geq \frac{1}{2 C_\beta^{(3)} \vert \lambda \vert}  \vert \nu \vert^{2\beta+1/2}\,.
\end{aligned}
\]
Hence, there is a constant $c_\beta^{(1)} > 4$ such that
for any $\vert \nu \vert \geq c_\beta^{(1)}$, we have
\[
\Vert (I - U^2)^{\beta/2} f \Vert > \frac{C_\beta^{(1)}}{\vert \lambda \vert} \vert \nu \vert^{2\beta+1/2}\,.
\]
Moreover, we may take $(c_\beta^{(1)})_{\beta \in \N}$ to be an increasing sequence
in $\beta$.

Now let $s \in \R^+\setminus\N$, and define $\beta = \lfloor s \rfloor$.
Furthermore, let $c_s^{(1)} := c_{\beta + 1}^{(1)} > c_{\beta}^{(1)}$.
Then for any $\vert \nu \vert \geq c_s^{(1)}$, we have
\[
\begin{aligned}
& \Vert (I - U^2)^{\beta/2} f \Vert > \frac{C_\beta^{(1)}}{\vert \lambda \vert} \vert \nu \vert^{2\beta+1/2}\,, \\
& \Vert (I - U^2)^{(\beta+1)/2} f \Vert > \frac{C_{\beta+1}^{(1)}}{\vert \lambda \vert} \vert \nu \vert^{2(\beta+1)+1/2}\,.
\end{aligned}
\]
Then the estimate for $s$ follows by interpolation.
This concludes the proof of Proposition~\ref{prop:lower_twist}.
\end{proof}

\begin{proof}[Proof of Theorem~\ref{main_thm1}]
Let $C > 0$, and let $s \in \N \setminus\{0\}$,
$\lambda \in \R^*$ and let $\sigma \in [0, s+1/2)$.
Recall that $\hat g_\lambda(\xi) := \hat g(\lambda \xi)$,
so from \eqref{eq:fdef}, we take our example to be
\begin{equation}\label{eq:f-lambda}
\hat g (\xi) = \hat q(\lambda^{-1} \xi) (\lambda^{-(\nu+1)} \xi^{\nu+1} - 1)\,,
\end{equation}
which satisfies $\hat g(\lambda) = 0$.
Let the positive constants $C_s^{(0)}$
and $C_s^{(1)}$ be from Lemma~\ref{f-norm-est}
and Proposition~\ref{prop:lower_twist}, respectively.
Because the parameters $\nu$ for the principal series
can be arbitrarily large in absolute value,
take $\vert \nu \vert$ large enough so that
Proposition~\ref{prop:lower_twist} holds and
\begin{equation}\label{eq:nu-large-enough}
 C_s^{(1)} \vert \nu \vert^{2s+1/2}  > C C_s^{(0)} (\vert \lambda \vert^{-(2s+\sigma)} + \vert \lambda \vert^{\sigma} ) \vert \nu \vert^{s+\sigma}\,.
\end{equation}

Let $\hat \triangle$ be the Fourier transform of the operator $\triangle$.
As in (56) of \cite{flaminio2016effective}, notice that for any $a, b, c \in \N$,
\[
\begin{aligned}
\hat V^a \hat X^b \hat U^c \hat g_\lambda = \lambda^{c-a} (\hat V^a \hat X^b \hat U^c \hat g)_\lambda\,.
\end{aligned}
\]
Therefore,
\[
\begin{aligned}
\Vert (1 + \hat \triangle)^{(s+\sigma)/2} \hat g_\lambda \Vert  & = \Vert [(I - \lambda^{-2} \hat V^2 - \hat X^2 - \lambda^{2} \hat U^2)^{(s+\sigma)/2} \hat g]_\lambda \Vert  \\
& \geq \min\{\vert \lambda\vert^{-(s+\sigma)}, \vert \lambda \vert^{(s+\sigma)}\} \Vert g \Vert_{s+\sigma} \\
& \geq (\vert \lambda\vert^{-(s+\sigma)} +  \vert \lambda \vert^{s+\sigma})^{-1} \Vert g \Vert_{s+\sigma}\,.
\end{aligned}
\]

Then by \eqref{eq:nu-large-enough},
\[
\begin{aligned}
\Vert (I - \hat \triangle)^{s/2} \hat f)_{\lambda} & \Vert
\geq \Vert (\hat U^s \hat f)_{\lambda} \Vert  \\
&  = \vert \lambda \vert^{-s} \Vert \hat U^s \hat f_\lambda \Vert  \\
& \geq \vert \lambda \vert^{-s} C_s^{(1)} \vert \nu \vert^{2s-1/2} \\
& > C \vert \lambda \vert^{-s} C_s^{(0)} \vert \nu \vert^{s+\sigma}  (\vert \lambda \vert^{-(2s+\sigma)} + \vert \lambda \vert^{\sigma} ) \\
& \geq C \vert \lambda \vert^{-s}  \Vert \hat g _\lambda\Vert_{s +\sigma} (\vert \lambda \vert^{-(2s+\sigma)} + \vert \lambda \vert^{\sigma} ) \\
& \geq C \frac{\vert \lambda \vert^{-s}}{\vert \lambda\vert^{-(s+\sigma)} +  \vert \lambda \vert^{s+\sigma}} \Vert ((I - \hat \triangle)^{(s+\sigma)/2} g )_\lambda\Vert_{0} (\vert \lambda \vert^{-(2s+\sigma)} + \vert \lambda \vert^{\sigma} )  \\
& \geq C  \Vert ((I - \triangle)^{(s+\sigma)/2} g )_\lambda\Vert_{0} \,.
\end{aligned}
\]
Therefore,
\[
\Vert f \Vert_s > C \Vert g \Vert_{s+\sigma}\,.
\]

The estimate for $s \geq 0$ follows by interpolation,
which completes the proof of the theorem.
\end{proof}

\begin{proof}[Proof of Theorem~\ref{main_thm3}]
Using the Fourier transform in the line model, the
cohomological equation \eqref{eq:co_eqn-map}
for unipotent maps has the form
\begin{equation}\label{eq:Fourier-coeqn}
(e^{-L \sqrt{-1} \xi} - 1)\hat f(\xi) = \hat g(\xi)\,.
\end{equation}
Define
\[
\begin{aligned}
\hat g^{twist}(\xi) := \hat q(\frac{L}{2\pi}\xi) [(\frac{L}{2\pi}\xi)^\nu - 1]\,,
\end{aligned}
\]
and
\[
\hat f(\xi) := \frac{\hat g^{twist}(\xi)}{\frac{L}{2\pi}\xi - 1)}\,.
\]
So for $\lambda = \frac{2\pi}{L}$, $\hat g^{twist}$ is given by
$\hat g_\lambda$ in formula \eqref{eq:fdef},
and the above $\hat f$ is given by $-\sqrt{-1} \lambda^{-1}$ times the function
$\hat f$ from formula \eqref{eq:coeqn-twist-gen_lambda}.

Further define $H$ on $\R$ by
\[
H = \left(\frac{e^{-L \sqrt{-1} \xi} - 1}{\frac{L}{2\pi} \xi - 1}\right)\,,
\]
and notice that
\begin{equation}\label{eq:H-smooth}
H, H^{-1} \in C^\infty(\frac{2\pi}{L} [\frac{3}{4}, \frac{4}{3}])\,.
\end{equation}
Define
\[
\begin{aligned}
\hat g & = g^{twist} \cdot H\,,
\end{aligned}
\]
so
\[
\begin{aligned}
(e^{-L \sqrt{-1} \xi} - 1)\hat f(\xi) & = (e^{-L \sqrt{-1} \xi} - 1) \frac{\hat g^{twist}(\xi)}{(\frac{L}{2\pi}\xi - 1)} \\
& = H (\xi) \hat g^{twist}(\xi) \\
& = g(\xi)\,,
\end{aligned}
\]
where because $\hat q$ is supported on $[ \frac{3}{4}, \frac{4}{3}]$,
we get that $\hat f$ and $\hat g$ are supported on $\frac{2\pi}{L}[ \frac{3}{4}, \frac{4}{3}]\,.$

Then for any $s \in \N$, the commutation relations give a constant $C_s > 0$ such that
\begin{align}
\Vert g \Vert_{s} & \leq C_{s} \sum_{m + n + \beta \leq s}
\Vert V^m X^n U^\beta g\Vert  \notag \\
& \leq C_{s} \sum_{m + n + \beta \leq s}
\Vert \hat V^m \hat X^n \hat U^\beta (H \cdot \hat g^{twist}) \Vert
 \label{eq:g-map:22}
\end{align}
The Leibniz-type formula for $\hat V$ (see \eqref{eq:Leibniz}) gives
universal coefficients $ (b^{(\beta_2)}_{ijkm})$ such that
\[
 \hat U^{\beta}(\hat g^{twist} (\frac{L}{2\pi}\xi) \cdot H ) = \sum_{\substack { i+j+ m\leq
  \beta \\
 k \leq m }}
 b^{(\beta)}_{ijkm} [(\frac{d}{d\xi})^m \hat U^i H(\xi) ]
 [(\hat X - (1-\nu))^k \hat U^j \hat g^{twist}( \frac{L}{2\pi}\xi)]\,.
\]
Define $H_L$ by $H(\xi) = H_L(L \xi)$.
Then because $H_L$ is smooth and independent of $\nu$ and $L$,
for each $m+i \leq \beta$, we have a constant $C_{\beta}> 0$ such that
\[
\vert (\frac{d}{d\xi})^m \hat U^i H_L(\xi)\vert \leq C_{\beta} \vert \nu \vert^i\,.
\]
Therefore,
\[
\vert (\frac{d}{d\xi})^m \hat U^i H(\xi)\vert \leq C_{\beta} (1 + L^{m + 2i}) \vert \nu \vert^i\,.
\]
Moreover, a calculation shows that for any constant $c$ and for each $j \in \N$,
\begin{equation}\label{eq:scaling}
\hat U^j \hat g^{twist} (c\xi) = c^j [\hat U^j \hat g^{twist}](c \xi)\,, \quad \hat X^k \hat g^{twist} (c \xi) = [\hat X^k \hat g^{twist}](c \xi)\,.
\end{equation}
Then by Lemma~\ref{f-norm-est},
there is a constant $C_\beta > 0$ such that
\[
\begin{aligned}
\Vert \hat U^{\beta}(\hat g^{twist}(\frac{L}{2\pi}\xi)\cdot H)\Vert
& \leq C_{\beta} \sum_{i+j+ m\leq
  \beta}  (1 + L^{m + 2i + j}) \vert \nu \vert^{i + j} \\
& \leq C_{\beta} (1 + L^{2\beta}) \vert \nu \vert^{\beta}\,.
\end{aligned}
\]
Finally, because $H \cdot \hat g^{twist}$ is compactly supported and the
derivatives $\hat U$ and $\hat X$ contribute at most one power of $\nu$,
we get a constant $C_\beta > 0$ such that
\[
\begin{aligned}
\eqref{eq:g-map:22} & \leq C_s (1+L^{2s}) \sum_{m + n + \beta \leq s} \vert \nu \vert^{n + \beta} \\
&  \leq C_s (1+L^{2s}) \vert \nu \vert^{s}\,.
\end{aligned}
\]
It follows by interpolation that for any $s \geq 0$,
there is a constant $C_s^{(2)} > 0$ such that
\[
\Vert g \Vert_s \leq C_{s}^{(2)} (1+L^{2s}) \vert \nu \vert^{s}\,.
\]

On the other hand, by \eqref{eq:scaling} and by Proposition~\ref{prop:lower_twist},
for any $s \geq 0$, there are constants $c_s^{(1)} > 4$, $C_s^{(1)} > 0$ such that
for any $\vert \nu \vert \geq c_s^{(1)}$,
\[
\begin{aligned}
\Vert (I - \hat U^2)^{s/2} \hat f \Vert & =  (\frac{2\pi}{L})^{-1/2} \Vert (I - (\frac{2\pi}{L})^2 \hat U^2)^{s/2} \hat f_{(2\pi/L)} \Vert \\
& > c_s^{(0)} (\frac{L}{2\pi} + \frac{2\pi}{L})^{-(s +1/2)} \Vert (I - \hat U^2)^{s/2} \hat f_{(2\pi/L)}\Vert \\
& > c_s^{(0)} (\frac{L^2 + 4\pi^2}{2\pi L})^{-(s +1/2)} \vert \nu \vert^{2s+1/2}\,.
\end{aligned}
\]

So let $\sigma \in [0, s + 1/2)$  and $C > 0$.
Then for any $\vert \nu\vert$ large enough that
\[
\left\{
\begin{aligned}
& \vert \nu \vert \geq \alpha_{s}^{(0)}\,,  \\
&  c_s^{(0)} (\frac{L^2 + 4\pi^2}{2\pi L})^{-(s +1/2)} \vert \nu \vert^{2s+1/2}  > C C_{s+\sigma, n}^{(2)} (1+L^{2(s+\sigma)}) \vert \nu \vert^{s+\sigma}\,,
\end{aligned}
\right.
\]
we get
\[
\begin{aligned}
\Vert (I - U^2)^{s/2} f \Vert & \geq c_s^{(0)} (\frac{L^2 + 4\pi^2}{2\pi L})^{-(s +1/2)} \vert \nu \vert^{2s+1/2} \\
& \geq C C_{s+\sigma, n}^{(0)} (1+L^{2(s+\sigma)}) \vert \nu \vert^{s+\sigma} \\
& > C \Vert g \Vert_{s+\sigma}\,.
\end{aligned}
\]
\end{proof}

\section{Proof of Theorems~\ref{th:8}}

Recall the definition of the translation operators $h^{(1)}_{L_1}$ and $h^{(2)}_{L_2}$ in \eqref{eq:cocycle_maps}.
Theorem 1.1 of \cite{damjanovic2014cocycle} shows that there is a solution $p\in W^\infty(\mathcal{H})$
such that
\begin{align}
  p\circ h_{L_1}^{(1)}-p=g\quad \text{and}\quad p\circ h_{L_2}^{(2)}-p=f
\end{align}
with estimates $\norm{p}_s\leq C_{s,L_2}\norm{f}_{3s+6}$, $s\geq0$. In \cite{damjanovic2014cocycle} $\mathcal{H}=L^2_0(G/\Gamma)$ and $\pi$ is the regular representation, where $\Gamma$ is an irreducible lattice in $G$ and $L^2_0(G/\Gamma)$ is the space of square integrable functions on $G/\Gamma$ with zero average. Note that the result can be extended to unitary representation of $G$ such that
such that the restriction of $\pi$ to any $\SL(2,\RR)$
factor has a spectral gap. We have the decomposition:
\begin{align*}
 L^2_0(G/\Gamma)=\int_{\otimes_{\mu,\theta}}\mathcal{H}_\mu\otimes\mathcal{H}_\theta
\end{align*}
where $\mathcal{H}_\mu$ and $\mathcal{H}_\theta$ range over all non-trivial irreducible representations of $\SL(2, \R)$. The solution $p$ was constructed, as well as
its estimates were obtained in
\cite{damjanovic2014cocycle} in each $\mathcal{H}_\mu\otimes\mathcal{H}_\theta$. Then discussion in Section \ref{sec:3} shows that $p$ is a bona fide solution.

Hence, it suffices to obtain the tame estimates of $p$ with respect to $f$ and $g$. We use $X_1$, $V_1$ and $U_1$ to denote the basis Lie algebra for the first copy of $\SL(2, \R)$ and $X_2$, $V_2$ and $U_2$ for the second copy.
For $Z \in \{X_2, V_2, U_2\}$, we note that
\begin{align*}
  (Z^np)\circ h_{L_1}^{(1)}-Z^np=Z^ng, \qquad \forall n\in\NN,
\end{align*}
Since the restriction of $\pi$ on the first copy of $\SL(2, \R)$ is still a unitary representation with spectral gap,
by using \eqref{for:6}, it follows that
\begin{align}\label{for:7}
  \Vert Z^np\Vert\leq C(L+\frac{1}{L})\Vert Z^ng \Vert_{2}\leq C(L+\frac{1}{L})\Vert g \Vert_{n+2},\qquad \forall n\in\NN.
\end{align}
Similarly, for $Y \in \{X_1, V_1, U_1\}$, we have
\begin{align}\label{for:8}
  \Vert Y^np\Vert\leq C(L+\frac{1}{L})\Vert Y^nf \Vert_{2}\leq C(L+\frac{1}{L})\Vert f \Vert_{n+2},\qquad \forall n\in\NN,
\end{align}
Then \eqref{for:5} follows directly from \eqref{for:7}, \eqref{for:8} and the following
 elliptic regularity theorem (see \cite[Chapter I, Corollary 6.5 and 6.6]{ter1996elliptic}):
\begin{theorem} Let $\pi$ be a unitary representation of a Lie group $G$ with Lie algebra $\mathfrak{g}$ on a
Hilbert space $\mathcal{H}$. Fix a basis $\{Y_j\}$ for $\mathfrak{g}$ and set $L_{2m}=\sum Y_j^{2m}$, $m\in\NN$. Then
\begin{align*}
    \norm{v}_{2m}\leq C_m(\norm{L_{2m}v}+\norm{v}),\qquad \forall\, m\in\NN
\end{align*}
where $C_m$ is a constant only dependent on $m$ and $\{Y_j\}$.
\end{theorem}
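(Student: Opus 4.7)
My plan is to prove the inequality as an instance of elliptic regularity in the universal enveloping algebra, treating $L_{2m}$ as an elliptic differential operator of order $2m$ on the smooth vectors of $\pi$ and comparing it to the standard generator $(I+\triangle)^{m}$ of the Sobolev structure $W^{2m}(\mathcal{H})$.

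First I would set up the analytic framework. Since $\pi$ is unitary, each $Y_j$ acts as a skew-symmetric operator on $W^{\infty}(\mathcal{H})$, so $-Y_j^2$ is nonnegative self-adjoint and $(-1)^m L_{2m}=\sum_j(-Y_j^2)^m$ is a sum of nonnegative self-adjoint operators. By Nelson's theorem on symmetric elements of $U(\mathfrak{g})$ with a dense space of analytic vectors, the closure of $I+(-1)^m L_{2m}$ is self-adjoint and positive, so its spectral calculus is available; similarly for $(I+\triangle)^m$. In particular $\norm{v}_{2m}\asymp\norm{(I+\triangle)^m v}$, so it suffices to show
\[
\norm{(I+\triangle)^m v}\leq C_m\bigl(\norm{L_{2m}v}+\norm{v}\bigr).
\]

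Second, I would perform a symbol-level comparison. In the symmetric algebra $S^{2m}(\mathfrak{g})$, the principal symbol of $L_{2m}$ is $\sigma(\xi)=\sum_j\xi_j^{2m}$ while that of $(I+\triangle)^m$ is a positive scalar multiple of $\bigl(\sum_j\xi_j^2\bigr)^m$ (in coordinates dual to some orthonormal basis defining $\triangle$, which differs from $\{Y_j\}$ by a fixed invertible linear transformation). Both are strictly positive homogeneous polynomials of degree $2m$ on $\mathfrak{g}^{*}\setminus\{0\}$, so compactness of the unit sphere gives constants $c_1,c_2>0$ with $c_1\bigl(\sum\xi_j^2\bigr)^m\leq\sum\xi_j^{2m}\leq c_2\bigl(\sum\xi_j^2\bigr)^m$ pointwise on $\mathfrak{g}^{*}$.

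Third, I would lift this symbol comparison to an operator-level estimate by induction on $m$. The base case $m=0$ is trivial, and the base case $m=1$ follows from the definition of $\|\cdot\|_2$ and the change-of-basis comparison between $-L_2$ and $\triangle$. For the inductive step, assume $\norm{w}_{2k}\leq C_k(\norm{L_{2k}w}+\norm{w})$ for $k<m$. Expanding $(I+\triangle)^m$ and $L_{2m}$ using the Poincaré--Birkhoff--Witt basis, the symbol-level inequality implies
\[
(I+\triangle)^m = c\, L_{2m}+R,\qquad R\in U^{2m-1}(\mathfrak{g}),
\]
for a suitable constant $c$ and remainder $R$ of order at most $2m-1$. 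The triangle inequality yields $\norm{(I+\triangle)^m v}\leq C\norm{L_{2m}v}+\norm{Rv}$, and $\norm{Rv}\leq C\norm{v}_{2m-1}$. Interpolation between $\norm{\cdot}$ and $\norm{\cdot}_{2m}$ gives $\norm{v}_{2m-1}\leq\epsilon\norm{v}_{2m}+C_\epsilon\norm{v}$, so choosing $\epsilon$ small absorbs the lower-order term into the left-hand side and closes the induction.

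The main obstacle is the third step: rigorously lifting a pointwise inequality between polynomial symbols on the finite-dimensional space $\mathfrak{g}^{*}$ to an inequality between the corresponding left-invariant differential operators on $W^{\infty}(\mathcal{H})$. The difficulty lies in controlling all the commutators $[Y_i^a,Y_j^b]$ produced when re-ordering noncommuting factors in $U(\mathfrak{g})$; this is what forces one into the inductive/interpolation scheme above and is essentially the content of the Rockland-type hypoellipticity theorem used in the proof of Ter-Elst and Robinson \cite{ter1996elliptic}. In the present full-rank setting, where $\{Y_j\}$ already spans $\mathfrak{g}$, the induction combined with standard interpolation between Sobolev norms is enough to handle the lower-order corrections uniformly.
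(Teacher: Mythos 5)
There is a genuine gap, and it sits exactly where you flagged it: the passage from the pointwise symbol inequality to the operator identity $(I+\triangle)^m = c\,L_{2m}+R$ with $R\in U^{2m-1}(\mathfrak g)$. That identity is simply false for $m\geq 2$ (and $\dim\mathfrak g\geq 2$): the principal symbols $\bigl(\sum_j\xi_j^2\bigr)^m$ and $\sum_j\xi_j^{2m}$ are comparable in size but are not proportional as polynomials, so their difference is itself of top order $2m$, not of order $2m-1$. Concretely, for $m=2$,
\begin{align*}
\Bigl(\sum_j(-Y_j^2)\Bigr)^2=\sum_j Y_j^4+\sum_{i\neq j}Y_i^2Y_j^2+\text{(terms of order }\leq 3\text{)},
\end{align*}
and the mixed terms $Y_i^2Y_j^2$ are genuinely fourth order; they cannot be shoved into $R$ and then absorbed by the interpolation inequality $\norm{v}_{2m-1}\leq\epsilon\norm{v}_{2m}+C_\epsilon\norm{v}$. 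What your argument actually yields from positivity is only control of the pure powers: $\sum_j\norm{Y_j^m v}^2=\langle(-1)^mL_{2m}v,v\rangle\leq\norm{L_{2m}v}\,\norm{v}$, i.e. order $m$, and nothing yet about mixed monomials $Y_{j_1}\cdots Y_{j_k}v$ of order up to $2m$. Transferring a pointwise inequality between positive symbols to an a priori inequality between the corresponding noncommuting left-invariant operators is precisely the G\aa rding-type/elliptic-regularity content of the result you are trying to prove; your step three assumes it rather than proves it, and the commutator bookkeeping you defer to ``the content of the Rockland-type hypoellipticity theorem'' is the whole theorem.

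For comparison: the paper does not prove this statement at all; it quotes it directly from Robinson--ter Elst \cite{ter1996elliptic} (Chapter I, Corollaries 6.5 and 6.6), where $(-1)^mL_{2m}$ is treated as a strongly elliptic operator (its principal symbol satisfies $\sum_j\xi_j^{2m}\geq\kappa\abs{\xi}^{2m}$, which is the correct use of your compactness argument) and the estimate $\norm{v}_{2m}\leq C_m(\norm{L_{2m}v}+\norm{v})$ is obtained from the general theory of strongly elliptic operators associated with unitary (more generally, continuous Banach-space) representations, via G\aa rding inequalities and semigroup/resolvent kernel bounds rather than an exact decomposition of $(I+\triangle)^m$ modulo lower order. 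If you want a self-contained proof, you would need to reproduce that machinery (or at least a representation-theoretic G\aa rding inequality for $\sum_j(-Y_j^2)^m$ together with an induction controlling all mixed monomials of order $\leq 2m$); the symbol comparison plus PBW plus interpolation, as written, does not close.
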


\appendix
\section{Unitary representations of $\SL(2, \R)$}\label{sect:SL2R_reps}
Section~\ref{sec:3} already discussed the direct integral decomposition for unitary representations of general type $I$ Lie groups, but we can say more in the special case of $\SL(2, \R)$.

Recall that $\sl(2, \R)$ is generated by the vector fields
 \[
 \begin{aligned}
 X= \begin{pmatrix} {1}&0\\0& {-1}
 \end{pmatrix}, \quad U=\begin{pmatrix} 0&1\\0& 0
 \end{pmatrix}, \quad V=\begin{pmatrix} 0&0\\1& 0
 \end{pmatrix}.
 \end{aligned}
 \]
 The \emph{Casimir} operator is given by
\begin{align*}
\Box:= -X^2-2(UV+VU)
\end{align*}
and generates the center of the enveloping algebra of $\mathfrak{sl}(2,\RR)$.
Any unitary representation $(\pi, \mathcal{H})$ of $SL(2,\RR)$ is decomposed into a direct integral (see \cite{flaminio2003invariant} and \cite{mautner1950unitary})
\begin{align}\label{for:1}
\mathcal{H}=\int_{\oplus}\mathcal{K}_\mu ds(\mu)
\end{align}
with respect to a positive Stieltjes measure $ds(\mu)$ over the spectrum $\sigma(\Box)$. The
Casimir operator acts as the constant $\mu\in \sigma(\Box)$ on every Hilbert representation space $\mathcal{K}_\mu$, which does not need to be irreducible.
In fact, $\mathcal{K}_\mu$ is in general
the direct sum of an (at most countable) number of unitarily equivalent representation spaces $H_\mu$ equal to the spectral multiplicity of $\mu\in \sigma(\Box)$.   We say that \emph{$\pi$ has a spectral gap} if there is some $u_0>0$ such that $s((0,u_0])=0$. It is clear that if $\pi$ has a spectral gap then $\pi$ contains no non-trivial $SL(2,\RR)$-fixed vectors.

The representation spaces $H_\mu$ have unitarily equivalent models, which we also write as $ H_\mu$, and each is one of the following four classes:
\begin{itemize}
  \item
    If $ \mu \in (0, 1) $, then $H_\mu $ is in the complementary
    series.
  \item
    If $ \mu > 1 $, then $H_\mu $ is in the principal series.
  \item
    If $ \mu = 1 $, then $H_\mu $ is in the mock discrete series or
    the principal series.
  \item
    Otherwise if $ \mu \leq 0 $, then $H_\mu $ is in the discrete series.
\end{itemize}

It will be convenient to describe these representations via models that use a representation parameter $\nu$ satisfying
\[
\mu = 1 - \nu^2\,,
\]
where it is sufficient to take $\nu = \sqrt{1 - \mu}$.
Below are standard models from the literature.  

The line model of the principal or complementary series consists of functions on $\R$ 
and has the following norm.  For $\mu \geq 1$, $\Vert f \Vert_{H_\mu} := \Vert f \Vert_{L^2(\R)}$, 
and when $\mu > 1$, 
\[
  \| f \|_{H_{\mu}} := \left(\int_{\mathbb{R}^2} \frac{f(x) \overline{f(y)}}
  {|x - y|^{1 - \nu}} dx dy\right)^{1/2}\,.
\]
The group action on $H_\mu$ is given by 
\[
A \cdot f(x) := |-b x + d|^{-(\nu + 1)} f(\frac{a x - c}{-b x +
  d})\,,
\]
where $x \in \R$ and 
\begin{equation}\label{eq:A_SL2R}
A = \left(\begin{array}{rr}
    a & b\\
    c & d
  \end{array}\right) \in \SL(2, \mathbb{R})\,.
  \end{equation}  
  This yields the vector fields 
  \[
\begin{array}{cc}
V = -\frac{d}{d x}\,,  \quad \quad  U = (1 + \nu) x + x^{2}\frac{d}{d x}\,, \quad \quad X = (1 + \nu) + 2 x \frac{d}{d x}\,.
\end{array}
\]

The holomorphic discrete or mock discrete series consists of holomorphic functions on the upper half-plane, $\mathbb H$.  Its counterpart, the anti-holomorphic discrete or mock discrete series, does not need to be considered because of a complex anti-linear isomorphism between the two spaces.  
The holomorphic discrete or mock discrete series norm is given by  
\[
\|f\|_{H_\mu} =
\begin{cases}
 \left(\int_{\mathbb H} |f(x+iy)|^2 \,y^{\nu-1}\,dx\,dy\right)^{1/2}  & \quad \quad \nu\ge 1\,,\\
  \left(\sup_{y>0} \int_{\R} |f(x+iy)|^2 \,dx\right)^{1/2} & \quad \quad \nu = 0\,.
\end{cases}
\]

The group action is analogous to the one for the line model.  
For $z \in \mathbb H$ and $A$ given by \eqref{eq:A_SL2R}, 
we have 
\[
A\cdot f(z) :=  |-b z + d|^{-(\nu + 1)} f(\frac{a z - c}{-b z +
  d}),\,,
\]
which yields the vector fields
\[
\begin{array}{ll}
V = -\frac{d}{d z}\,, \quad \quad U = (1 + \nu) z + z^{2}\frac{d}{d z}\,, \quad \quad X = (1 + \nu) + 2 z \frac{d}{d z}\,.
\end{array}
\]

\bibliography{mathBib}{}
\bibliographystyle{plain}

\end{document}